\newtheorem{theorem}{Theorem}
\newtheorem{lemma}[theorem]{Lemma}
\newtheorem{corollary}[theorem]{Corollary}
\newtheorem{remark}{Remark}
\newtheorem{definition}{Definition}
\newtheorem{observation}{Observation}
\newcommand{\bD}{\boldsymbol{D}}
\newcommand{\bc}{\boldsymbol{c}}
\newcommand{\bM}{\boldsymbol{M}}
\newcommand{\bN}{\boldsymbol{N}}
\newcommand{\bS}{\boldsymbol{S}}
\newcommand{\cF}{\mathcal{F}}
\newcommand{\orig}{\boldsymbol{o}}
\newcommand{\beq}{\begin{eqnarray}}
\newcommand{\eeq}{\end{eqnarray}}
\newcommand{\beqn}{\begin{equation}}
\newcommand{\eeqn}{\end{equation}}
\newcommand{\Z}{\mathbb{Z}}
\newcommand{\Rp}{\mathbb{R}_+}
\renewcommand{\hat}{\widehat}
\newcommand{\integers}{\mathbb{Z}}
\newcommand{\T}{\mathbb{T}}
\newcommand{\Tfour}{T_4}
\newcommand{\even}{\mathrm{even}}
\newcommand{\odd}{\mathrm{odd}}
\newcommand{\eps}{\epsilon}
\newcommand{\Tsaw}{T_{\mathrm{saw}}}
\newcommand{\dist}{\mathrm{dist}}
\newcommand{\bfrho}{\boldsymbol{\rho}}
\newcommand{\bfeta}{\boldsymbol{\eta}}
\newcommand{\FM}{\mathcal F_{\leq\bM}}
\newcommand{\Ball}{\mathrm{B}}
\newcommand{\bound}{{2.3882}}
\newcommand{\boundF}{{2.1625}}
\newcommand{\simplerstarcond}{{\star}}
\newcommand{\starcond}{\star\star}
\renewcommand{\phi}{\varphi}
\begin{document}

\title{
Improved Mixing Condition on the Grid for Counting and Sampling Independent Sets}

\author{
Ricardo Restrepo\thanks{School of Mathematics, Georgia Institute of
Technology, Atlanta GA 30332. Email:
\{restrepo,tetali\}@math.gatech.edu. Research supported in part by
NSF grant CCF-0910584.} \and Jinwoo Shin\thanks{School of Computer
Science, Georgia Institute of Technology, Atlanta GA 30332. Email:
\{jshin72,ljyang,vigoda\}@cc.gatech.edu. Research supported in part
by NSF grants CCF-0830298 and CCF-0910584.
Jinwoo Shin was supported by the Algorithms and Randomness Center
at Georgia Tech.} \and Prasad
Tetali$^\star$ \and Eric Vigoda$^\dag$ \and Linji Yang$^\dag$ }

\maketitle

\thispagestyle{empty}

\begin{abstract}
The hard-core model has received much attention in the past couple
of decades as a lattice gas model with hard constraints in
statistical physics, a multicast model of calls in communication
networks, and as a weighted independent set problem in
combinatorics, probability and theoretical computer science.

In this model, each independent set $I$ in a graph $G$ is weighted
proportionally to $\lambda^{|I|}$, for a positive  real parameter
$\lambda$.  For large $\lambda$, computing the partition function
(namely, the normalizing constant which makes the weighting a
probability distribution on a finite graph) on graphs of maximum
degree $\Delta\ge 3$,  is a  well known computationally challenging
problem.  More concretely, let $\lambda_c(\T_\Delta)$ denote  the
critical value for the so-called uniqueness  threshold of the
hard-core model on the infinite $\Delta$-regular tree;  recent
breakthrough results of Dror Weitz (2006) and Allan Sly (2010) have
identified $\lambda_c(\T_\Delta)$ as a threshold where the hardness
of estimating the above partition function undergoes a computational
transition.

We focus on the well-studied particular case of the square lattice
$\integers^2$, and provide a new  lower bound for the uniqueness
threshold, in particular taking it well above $\lambda_c(\T_4)$.
Our technique refines and builds on the tree of self-avoiding walks
approach of Weitz, resulting in a new technical sufficient criterion
(of wider applicability) for establishing strong spatial mixing (and
hence uniqueness) for the hard-core model. Our new criterion
achieves better bounds on strong spatial mixing when the graph has
extra structure, improving upon what can be achieved by just using
the maximum degree. Applying our technique to $\integers^2$ we prove
that strong spatial mixing holds for all $\lambda<\bound$, improving
upon the work of Weitz that held for $\lambda<27/16=1.6875$. Our
results imply a fully-polynomial {\em deterministic} approximation
algorithm for estimating the partition function, as well as rapid
mixing of the associated Glauber dynamics to sample from the
hard-core distribution.
\end{abstract}

\newpage

\section{Introduction}

In this paper we study phase transitions for sampling weighted
independent sets (weighted by an activity $\lambda>0$) of the
2-dimensional integer lattice $\integers^2$. In statistical physics
terminology, we study the hard-core lattice gas model
(\cite{BergSteif,GauntFisher}), which is a simple model of a gas
whose particles have non-negligible size (thus preventing them from
occupying neighboring sites), with activity $\lambda \in \Rp$
corresponding to the so-called fugacity of the gas. More formally,
for a finite graph $G=(V,E)$, let $\Omega=\Omega(G)$ denote the set
of independent sets of $G$. Given an independent set
$\sigma\in\Omega$, its weight is defined as $w(\sigma) =
\lambda^{|\sigma|}$ and $v\in V$ is said to be occupied {under
$\sigma$} if $v\in \sigma$. The associated Gibbs (or Boltzmann)
distribution $\mu=\mu_{G,\lambda}$ is defined on $\Omega$ as
$\mu(\sigma) = w(\sigma)/Z$, where
$Z=Z(G,\lambda)=\sum_{\eta\in\Omega} w(\eta)$ is commonly referred
to as the partition function.

Recall that Valiant \cite{Valiant} showed that {\em exactly
computing} the number of independent sets is \#P-complete, even when
restricted to 3-regular graphs (see Greenhill \cite{Greenhill}).
 Hence, we focus our attention on approximation algorithms for
 estimating the number, or more generally, the partition function.
It is well known \cite{JVV} that the problem of approximating the
partition function $Z$ and that of sampling from a distribution that
is close to the Gibbs distribution $\mu$, are polynomial-time
reducible to each other (see also \cite{SVV}).

The fundamental notion of a phase transition for a statistical
mechanics model on an infinite graph addresses the critical point at
which the model starts to exhibit a certain long-range dependence,
as a system parameter is varied.  In particular, the so-called
critical inverse temperature $\beta_c$ for the Ising or the Potts
model, and the critical activity $\lambda_c$ for the hard-core
lattice gas model, are prime examples where the system undergoes a
transition from uniqueness to multiplicity of the infinite-volume
Gibbs measures.

Phase transition in the hard-core model is also intimately related
to the computational complexity of estimating the partition function
$Z$. Recently, a remarkable connection was established between the
computational complexity of approximating the partition function for
graphs of maximum degree $\Delta$ and the phase transition
$\lambda_c(\T_\Delta)$ for the infinite regular tree $\T_\Delta$ of
degree $\Delta$. On the positive side, Weitz \cite{Weitz} showed a
deterministic fully-polynomial time approximation algorithm (FPAS)
for approximating the partition function for any graph with maximum
degree $\Delta$, when $\lambda<\lambda_c(\T_\Delta)$ and $\Delta$ is
constant.  On the other side, Sly \cite{Sly} recently showed that
for every $\Delta\geq 3$, it is NP-hard (unless NP=RP) to
approximate the partition function for graphs of maximum degree
$\Delta$, when
$\lambda_c(\T_\Delta)<\lambda<\lambda_c(\T_\Delta)+\eps_\Delta$, for
some function $\eps_\Delta>0$. More recently, Galanis et al.
\cite{Galanis} improved the range of $\lambda$ in Sly's
inapproximability result, extending it to all
$\lambda>\lambda_c(\T_\Delta)$ for the cases $\Delta=3$ and
$\Delta\geq 6$.

\subsection{Prior history and current work}
Our work builds upon Weitz's work to get improved results for
specific graphs of interest. We focus our attention on what is
arguably the simplest, not yet well-understood, case of interest
namely the square grid, or the 2-dimensional integer lattice
$\integers^2$. Empirical evidence suggests that the critical point
$\lambda_c(\integers^2)\approx 3.796$ \cite{GauntFisher,BET,Racz},
but rigorous results are significantly far from this conjectured
point. The possibility of there being multiple such $\lambda_c$ is
not ruled out, although no one believes that this is the case.

From below, van den Berg and Steif \cite{BergSteif}
used a disagreement percolation argument to
prove $\lambda_c(\integers^2)>\frac{p_c}{1-p_c}$ where
$p_c$ is the critical probability for site percolation on $\integers^2$.
Applying the best known lower bound on $p_c > 0.556$ for $\integers^2$
by van den Berg and Ermakov \cite{BE} implies
$\lambda_c(\integers^2)>1.252\dots$.
Prior to that work, an alternative approach
aimed at establishing the Dobrushin-Shlosman criterion \cite{DS},
yielded, via computer-assisted proofs,
$\lambda_c(\integers^2)>1.185$ by
Radulescu and Styer \cite{RS}, and
$\lambda_c(\integers^2)>1.508$ by Radulescu \cite{Rad}.

These results were improved upon by
Weitz \cite{Weitz} who showed that $\lambda_c(\integers^2) \geq
\lambda_c(\T_4) = 27/16=1.6875$, where $\T_\Delta$ is the infinite,
complete, regular tree of degree $\Delta$. For the upper bound, a
classical Peierls' type argument implies
$\lambda_c(\integers^2)=O(1)$ \cite{Dobrushin}.  (A related result
of Randall \cite{Randall} showing slow mixing of the Glauber
dynamics for $\lambda>8.066$ gives hope for a better upper bound on
$\lambda_c(\integers^2)$.) The regular tree $\T_\Delta$ is one of
the only examples (that we know of)
 where the critical point is
known exactly, and in this case, Kelly \cite{Kelly} showed that
$\lambda_c(\T_\Delta) = (\Delta-1)^{\Delta-1}/(\Delta-2)^\Delta$.

In this work we present a new general approach which, for the case
of the hard-core model on $\integers^2$, improves the lower bound to
$\lambda_c(\integers^2)> \bound$. There are various algorithmic
implications for finite subgraphs of the $\integers^2$ when
$\lambda<\bound$. Our results imply that Weitz's deterministic FPAS
is also valid on subgraphs of $\integers^2$ for the same range of
$\lambda$. Thanks to the existing literature on general spin systems
(\cite{MO1,MO2,Cesi,DSVW}), our results also imply that the Glauber
dynamics has $O(n\log{n})$ mixing time for any finite subregion
$G=(V,E)$ of $\integers^2$ when $\lambda<\bound$, where $n=|V|$.
Recall that the Glauber dynamics is a simple Markov chain that
updates the configuration at a randomly chosen vertex in each step,
see \cite{LPW} for an introduction to the Glauber dynamics. The
stationary distribution of this chain is the Gibbs distribution.
Hence, it is of interest as an algorithmic technique to randomly
sample from the Gibbs distribution, and also as a model of how
physical systems reach equilibrium.  The mixing time is the number
of steps (from the worst initial configuration) until the
distribution is guaranteed to be within variation distance $\leq
1/4$ of the stationary distribution.

As in Weitz's work, our approach can be used for other 2-spin
systems, such as the Ising model.  This is discussed in Section
\ref{sec:Ising}. As will be evident from the following high-level
idea of our approach, it can be applied to other graphs of interest.
Our work also provides an arguably simpler way to derive the main
technical result of Weitz showing that any graph with maximum degree
$\Delta$ has strong spatial mixing (SSM) when
$\lambda<\lambda_c(\T_\Delta)$.

To underline the difficulty in estimating bounds on $\lambda_c$, we
remark  that the {\em existence} of a (unique) critical activity
$\lambda_c$ remains conjectural and an open problem for
$\mathbb{Z}^d$, for $d\ge 2$. In contrast, for the Ising model, the
critical inverse temperature $\beta_c(\mathbb{Z}^2)$ has been known
since 1944 \cite{Ons44}; interestingly, the corresponding critical
point for the  $q$-state Potts model (for $q\ge 2$) has only
recently been established (by Beffara and Duminil-Copin \cite{BD10})
to be $\beta_c(q) = \log(1+\sqrt{q})$, settling a long-standing open
problem. The lack of monotonicity in $\lambda$ in the hard-core
model poses a serious challenge in establishing such a sharp result
for this model.  In fact, Brightwell et al.
\cite{brightwell1999nonmonotonic} showed that in general such a
monotonicity need not hold, by providing an example with a
non-regular tree.

\section{Technical Preliminaries and Proof Approach}

Before presenting our approach, it is useful to review briefly the
uniqueness/non-uniqueness phase transition, and introduce associated
notions of decay of spatial correlation, known as weak and strong
spatial mixing properties.  Much of the below discussion is
simplified for the case of the hard-core model on $\integers^2$,
wherein one utilizes certain induced monotonicity (given by the
bipartite property) in the model and the amenability of the graph.

\subsection{Uniqueness, Weak and Strong Spatial Mixing}
\label{sec:mixing-definitions}

Let $B_L$ denote the finite graph corresponding to a box of
side-length $2L+1$ centered around the origin in $\integers^2$.
Thus, $B_L=(V,E)$, where $V={(i,j)\in \mathbb{Z}^2:-2L-1\leq i,j\leq 2L+1}$
with  edges between pairs of vertices at $L_1$ distance (or
Manhattan distance) equal to one. Since this is a bipartite graph,
we may fix one such partition $V=\even\cup\odd$ -- for example, it
is standard to consider the set of  vertices at an even distance
from the origin as the even set. The boundary of $B_L$ are those
vertices $v=(v_1,v_2)\in V$ where $|v_i| = 2L+1$ for $i=1$ or $i=2$.
The hard-core model on bipartite graphs is a monotone system (e.g.,
see \cite{DSVW}), which for the current discussion implies that we
only have to consider two assignments to the boundary: all even
vertices or all odd vertices on the boundary are occupied. Let
$\alpha_{L,r}^{\even}$ ($\alpha_{L,r}^{\odd}$) denote the marginal
probability that the origin $r$ is unoccupied given the even (odd,
respectively) boundary. 
Then to establish uniqueness of the Gibbs measures, we need that:
\[ \lim_{L\rightarrow\infty} \vert \alpha_{L,r}^{\even} - \alpha_{L,r}^{\odd}\vert = 0.
\]
We are interested in the critical point $\lambda_c$ for the
transition between uniqueness and non-uniqueness.  A standard way to
establish uniqueness is by proving one of the  spatial mixing
properties introduced next.

Let $G=(V,E)$ be a (finite) graph.  For $S\subset V$, a
configuration $\bfrho$ on $S$ specifies a subset of $S$ as occupied
and the remainder as unoccupied. Let $\mu^{\bfrho}=\mu^{\bfrho}_G$
denote the Gibbs distribution conditional on configuration $\bfrho$
to $S$. For $v\in V$, let $\alpha^{\bfrho}_v=\alpha_{G,v}^{\bfrho}$
denote the marginal probability that $v$ is unoccupied in
$\mu^{\bfrho}$.

The first spatial mixing property is {\em Weak Spatial Mixing
(WSM)}. Here we consider a pair of boundary configurations on a
subset $S$ and consider the ``influence'' on the marginal
probability that a vertex $v$ is unoccupied.   WSM says that the
influence on $v$ decays exponentially in the distance of $S$ from
$v$.

\begin{definition}[Weak Spatial Mixing]\label{def:wsm}
For the hard-core model at activity $\lambda$, for finite graph
$G=(V,E)$, {\em WSM} holds with rate $\gamma\in(0,1)$ if for every
$v\in V$, every $S\subset V$, and every two configurations
$\bfrho,\bfeta$ on $S$,
$$\left|\alpha^{\bfrho}_v-\alpha^{\bfeta}_v\right|
~\leq~\gamma^{\dist(v,T)}$$ where $\dist(v,S)$ is the graph distance
(i.e., length of the shortest path) between $v$ and (the nearest
point in) the subset $S$.
\end{definition}

The second property of interest is {\em Strong Spatial Mixing
(SSM)}.  The intuition is that if a pair of boundary configurations
on a subset $S$ agree at some vertices in $S$ then those vertices
``encourage'' $v$ to agree. Therefore, SSM says that the influence
on $v$ decays exponentially in the distance of $v$ from the subset
of vertices where the pair of configurations differ.

\begin{definition}[Strong Spatial Mixing]
For the hard-core model at activity $\lambda$, for finite graph
$G=(V,E)$, {\em SSM} holds with rate $\gamma\in(0,1)$ if
 for every $v\in V$, every $S\subset V$, every $S'\subset S$,
and every two configurations $\bfrho,\bfeta$ on $S$ where
$\bfrho(S\setminus S')=\bfeta(S\setminus S')$,
$$\left|\alpha^{\bfrho}_v-\alpha^{\bfeta}_v\right|
~\leq~\gamma^{\dist(v,S')}.$$
\end{definition}

Note that since $\dist(v,T) \le \dist(v, T\setminus S)$, SSM implies
WSM for the same rate. Moreover, it is a standard fact that such an
exponential decay in finite boxes (say), in $\Z^d$,   implies
uniqueness of the corresponding infinite volume Gibbs measure on
$\Z^d$, see Georgii \cite{Georgii} for an introduction to the theory
of infinite-volume Gibbs measures. We can specialize the above
notions of WSM and SSM to a particular vertex $v$, in which case we
say that WSM or SSM holds \emph{at} $v$. If the graph is a rooted
tree, we will always assume that the notions of WSM and SSM are
considered at the root.

For the hard-core model on a graph $G=(V,E)$, for a subset of
vertices $S$ and a fixed configuration $\rho$ on $S$, it is
equivalent to consider the subgraph $G'$ which we obtain for each
$v\in S$ that is fixed to be unoccupied we remove $v$ from $G$, and
for each $v\in S$ that is fixed to be occupied we remove $v$ and its
neighbors $N(v)$ from $G$. In this way we obtain the following
observation which will be useful for proving SSM holds.

\begin{observation}
\label{obs:SSM-WSM} For a graph $G=(V,E)$ and $v\in V$, SSM holds in
$G$ at vertex $v$ iff WSM holds for all subgraphs $G'$ (of $G$) at
vertex $v$. To be precise, by subgraphs we mean graphs obtained by
considering all subgraphs of $G$ and taking the component containing
$v$.
\end{observation}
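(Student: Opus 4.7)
The plan is to exploit the basic identification, specific to hard-core type models, between conditioning on boundary values and modifying the graph: fixing a vertex $u$ to be unoccupied is equivalent to deleting $u$, while fixing $u$ to be occupied is equivalent to deleting $u$ together with all its neighbors (since those neighbors are then forced unoccupied). Consequently, for any configuration $\tau$ on a subset $T\subseteq V$, the marginal at $v\notin T$ in $G$ conditioned on $\tau$ equals the unconditional marginal at $v$ in the subgraph $G_\tau$ obtained by these deletions followed by restricting to the connected component of $v$. Moreover, every subgraph of the type described in the statement arises as some $G_\tau$ (take $\tau$ to be all-unoccupied on the deleted set), so the observation amounts to the assertion that SSM in $G$ is exactly WSM applied to every such absorbed subgraph.

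For the $(\Leftarrow)$ direction, I would start with configurations $\bfrho,\bfeta$ on $S$ satisfying $\bfrho|_{S\setminus S'}=\bfeta|_{S\setminus S'}=:\tau$, form $G':=G_\tau$, and set $S'':=S'\cap V(G')$ with $\bfrho'':=\bfrho|_{S''}$, $\bfeta'':=\bfeta|_{S''}$. The identification above gives $\alpha^{\bfrho}_{G,v}=\alpha^{\bfrho''}_{G',v}$ and likewise for $\bfeta$, so WSM in $G'$ at rate $\gamma$ yields $|\alpha^{\bfrho}_{G,v}-\alpha^{\bfeta}_{G,v}|\le \gamma^{\dist_{G'}(v,S'')}$. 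The key geometric step is then the chain
\[
\dist_{G'}(v,S'')\ \ge\ \dist_G(v,S'')\ \ge\ \dist_G(v,S'),
\]
where the first inequality holds because every path in $G'$ is a path in $G$, and the second because $S''\subseteq S'$. Since $\gamma<1$, this delivers the SSM bound $\gamma^{\dist_G(v,S')}$. The $(\Rightarrow)$ direction is the mirror image obtained from the same bijection: given $G'=G_\tau$ and configurations $\bfrho,\bfeta$ on $S\subseteq V(G')$, lift them to $\bfrho\cup\tau,\bfeta\cup\tau$ on $S\cup T$ in $G$ (agreeing on $T$, differing on $S$) and invoke SSM in $G$ on the differing set $S$.

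The main step to watch for is the bookkeeping in the $(\Leftarrow)$ direction: some vertices of $S'$ may themselves be absorbed into $G_\tau$, namely the $G$-neighbors of occupied vertices in $S\setminus S'$. One must check this is harmless, which it is, because any such vertex is forced unoccupied in both $\bfrho$ and $\bfeta$ and therefore does not contribute to any disagreement. Once that is handled, the remaining work is just making sure the distance inequalities point the right way, which the subgraph inclusion $G'\subseteq G$ and the set inclusion $S''\subseteq S'$ do favorably.
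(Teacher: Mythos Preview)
Your approach via the conditioning--deletion identification is exactly what the paper has in mind; the paper states the observation immediately after recording that identification and gives no separate proof. Your $(\Leftarrow)$ argument is correct and careful, including the point about vertices of $S'$ absorbed into $G_\tau$, and this is the only direction the paper actually invokes later (to pass from uniform WSM on all subtrees in $\mathcal F_{\leq\bM}$ to SSM).

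One caveat on your $(\Rightarrow)$ sketch: invoking SSM in $G$ on the lifted pair yields $\lvert\alpha^{\bfrho}_{G',v}-\alpha^{\bfeta}_{G',v}\rvert\le\gamma^{\dist_G(v,S)}$, but WSM in $G'$ demands a bound of $\gamma^{\dist_{G'}(v,S)}$. Here the subgraph inclusion gives $\dist_{G'}(v,S)\ge\dist_G(v,S)$, which for $\gamma<1$ points the \emph{wrong} way, so the ``mirror'' argument does not by itself establish WSM in $G'$ at the same rate $\gamma$. The paper states the observation loosely and never uses this direction, so the issue is harmless for the applications; but as written your $(\Rightarrow)$ step is incomplete, and your closing remark that ``the distance inequalities point the right way'' is really only justified for the $(\Leftarrow)$ direction.
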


\subsection{Self-Avoiding Walk Tree Representation}
\label{sec:saw}

Since our work builds on that of Weitz's,  we first describe the
self-avoiding walk (SAW) tree representation introduced in
\cite{Weitz}. Given $G=(V,E)$, we first fix an arbitrary ordering
$>_w$ on the neighbors of each vertex $w$ in $G$. For each  $v\in
V$, the tree $\Tsaw(G,v)$ is constructed as follows. Consider the
tree $T$ of self-avoiding walks originating from $v$, additionally
including the vertices closing a cycle as leaves of the tree.
 We then fix such leaves of $T$ to be
 occupied or unoccupied in the following manner.
 If a leaf vertex closes a cycle in $G$, say $w\to v_1 \to \dots v_\ell\to w$, then if $v_1>_{w} v_\ell$ we fix this leaf to be unoccupied, otherwise if $v_1<_{w}v_\ell$ we fix the leaf to be occupied.
Note, if the leaf is fixed to be unoccupied we simply
 remove that vertex from the tree.  If the leaf is fixed to be
 occupied, we remove that leaf and all of its neighbors, i.e.
 we remove the parent of that leaf from the tree.
The resulting tree is denoted as $\Tsaw=\Tsaw(G,v)$. See Figure
\ref{fig:saw} for an illustration of $\Tsaw$ for a particular
example.

\begin{figure}[h]
\begin{center}
\includegraphics[width=11cm,angle=0]{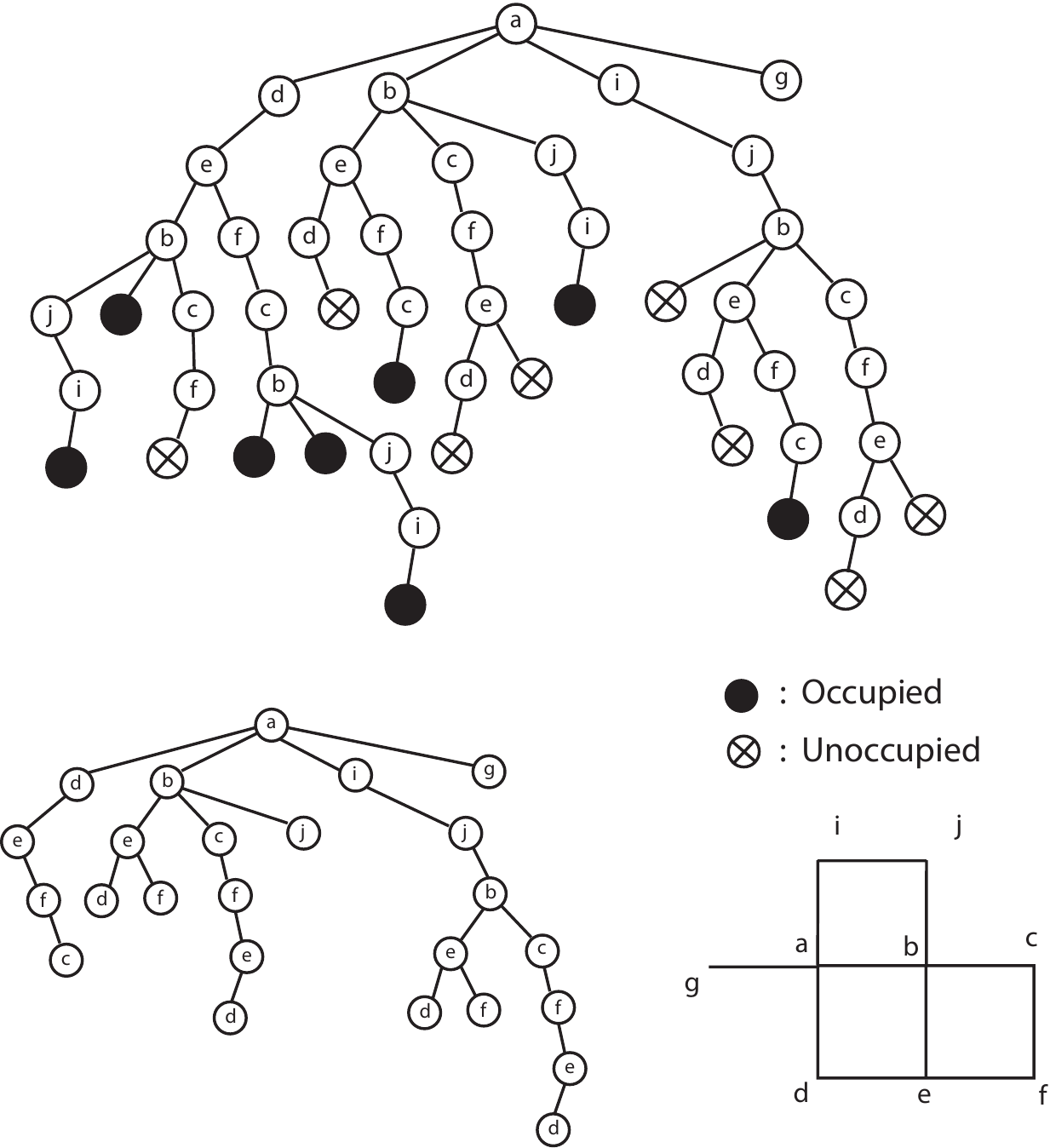}
\caption{Example of self-avoiding walk tree $\Tsaw$. The above tree
describes $\Tsaw(G,a)$ with occupied and unoccupied leaves, while
the below one is the same tree after removing those assigned leaves.
At each vertex, we consider the ordering $N>E>S>W$ of its neighbors
where $N,E,S,W$ represent the neighbors in the North, East, South,
West directions, respectively.} \label{fig:saw}
\end{center}
\end{figure}

Weitz \cite{Weitz} proves the following theorem for the hard-core
model, which shows that the marginal distribution at the root in
$\Tsaw(G,v)$ is identical to the marginal distribution for $v$ in
$G$. For a graph $G=(V,E)$, a subset $S\subset V$ and configuration
$\rho$ on $S$, for $\Tsaw=\Tsaw(G,v)$, let $\rho$ in $\Tsaw$ denote
the configuration on $S$ in $\Tsaw$ where for $w\in S$ every
occurrence of $w$ in $\Tsaw$ is assigned according to $\rho$.
\begin{theorem}[SAW Tree Representation, Theorem 3.1 in
\cite{Weitz}]\label{thm:saw} For any graph $G=(V,E)$, $v\in V$,
$\lambda>0$, and configuration $\bfrho$ on $S\subset V$, for
$T=\Tsaw(G,v)$ the following holds:
\begin{eqnarray*}
\alpha_{G,v}^{\bfrho} = \alpha_{T,v}^{\bfrho}.
\end{eqnarray*}
\end{theorem}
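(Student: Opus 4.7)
The plan is to prove the identity by strong induction on $|E(G)|$. The base case is when $G$ is a tree: then $\Tsaw(G,v) = G$ rooted at $v$ and the identity is immediate.

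For the inductive step, I set up two recursions. On the graph side, let $u_1 <_v u_2 <_v \cdots <_v u_d$ be the neighbors of $v$ in $G$ and let $\tau_i$ denote the configuration fixing $u_j$ unoccupied for all $j < i$. Using $\Pr(v\text{ occupied})/\Pr(v\text{ unoccupied}) = \lambda \cdot Z(G[V\setminus N[v]])/Z(G\setminus v)$, together with the chain rule applied to $\Pr(\text{all } u_i \text{ unoccupied in } G\setminus v)$, yields
$$\frac{1-\alpha_{G,v}^{\bfrho}}{\alpha_{G,v}^{\bfrho}} \;=\; \lambda \prod_{i=1}^{d} \alpha_{G\setminus v,\, u_i}^{\bfrho \cup \tau_i}.$$
On the tree side, the standard hard-core recursion at the root of $T = \Tsaw(G,v)$ reads
$$\frac{1-\alpha_{T,v}^{\bfrho}}{\alpha_{T,v}^{\bfrho}} \;=\; \lambda \prod_{i=1}^{d} \alpha_{T_i,\, c_i}^{\bfrho},$$
where $c_i$ is the $i$-th child of $v$ in $T$ (a copy of $u_i$) and $T_i$ is the subtree at $c_i$. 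Since both $\alpha_{G,v}^{\bfrho}$ and $\alpha_{T,v}^{\bfrho}$ are the same monotone function of the respective products, it suffices to show that these two products coincide.

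The main obstacle is precisely this product identity. The naive hope---matching the factors one at a time via the inductive hypothesis applied to $G\setminus v$---fails, because the subtree $T_i$ is not isomorphic to $\Tsaw(G\setminus v, u_i)$: walks in $T_i$ can close cycles back to the root $v$ and contribute leaves fixed by the SAW rule, whereas $\Tsaw(G\setminus v, u_i)$ has no such leaves. A small example (e.g.\ a triangle through $v$) already shows that the individual factors of the two products genuinely differ while the products themselves still agree. My plan to bridge this gap is to interpret both products as the same joint probability computed in two ways: the left-hand product equals $\Pr(u_1,\ldots,u_d \text{ all unoccupied in } G\setminus v \mid \bfrho)$ directly by the chain rule, and one then argues that the right-hand product equals the same joint probability by showing that the cycle-closure leaves encode exactly the missing dependence among the $u_i$'s. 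Concretely, each cycle through $v$ in $G$ produces two closure leaves in $T$ (one for each orientation around the cycle), and the rule ``$v_1 >_v v_\ell$ fix unoccupied, otherwise fix occupied'' assigns them opposite labels; this antisymmetric pairing is precisely what reproduces the conditioning $\tau_i$ inside the tree. Verifying that these paired leaves give the correct multiplicative compensation in the product of tree marginals---so that the inductive hypothesis applied to each $T_i$ (a tree, hence handled by the base case) closes the argument---is the technical heart of the proof.
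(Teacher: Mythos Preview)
The paper does not itself prove this theorem---it quotes it from Weitz \cite{Weitz}---so there is no in-paper proof to compare against. Your outline (induction on $|E(G)|$, graph-side telescoping of $\Pr(u_1,\ldots,u_d\text{ all unoccupied in }G\setminus v)$, tree-side one-step recursion) is exactly Weitz's argument, but your handling of the inductive step has a real gap.

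You write that the argument closes by ``the inductive hypothesis applied to each $T_i$ (a tree, hence handled by the base case).'' That misfires: $T_i$ being a tree only gives $\Tsaw(T_i,c_i)=T_i$ trivially, which says nothing about the graph side. The induction must be applied to the smaller \emph{graph} $G\setminus v$, and what makes that possible is a structural identity you have not stated. With $u_1<_v\cdots<_v u_d$, every copy of $u_j$ appearing inside $T_i$ acquires a cycle-closure child (a copy of $v$); by the rule, this child is fixed unoccupied when $j<i$ (hence deleted, no effect) and fixed occupied when $j>i$ (hence that copy of $u_j$ is forced unoccupied). All other cycle-closure leaves in $T_i$---those closing cycles not through $v$---coincide with those of $\Tsaw(G\setminus v,u_i)$. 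Therefore $T_i$, with its inherited SAW boundary, is exactly $\Tsaw(G\setminus v,u_i)$ together with the extra condition $\sigma_i:=\{u_j\text{ unoccupied}:j>i\}$, and induction on $G\setminus v$ yields $\alpha_{T_i,c_i}^{\bfrho}=\alpha_{G\setminus v,\,u_i}^{\bfrho\cup\sigma_i}$. The factors then match one by one with the graph side, provided you telescope there in the reverse order (your $\tau_i$ should fix $u_j$ for $j>i$, not $j<i$). Your intuition that ``both products equal the same joint probability'' is correct and would also reconcile the two orderings, but the reason the tree product equals that joint probability is precisely this identification plus induction on $G\setminus v$---not any argument internal to the trees $T_i$.
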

Note, the tree $\Tsaw(G,v)$ preserves the distance of vertices from
$v$ in $G$, which implies the following corollary.

\begin{corollary}\label{cor:weitz}
If  SSM holds with rate $\gamma$ for $\Tsaw(G,v)$ \mbox{ for all }
$v$,  then SSM holds for $G$ with rate~$\gamma$.
\end{corollary}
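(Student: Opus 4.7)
The proof is essentially a direct application of Theorem \ref{thm:saw} combined with a geometric observation about distances in the SAW tree. The plan is as follows.

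Fix $v\in V$, a set $S\subset V$, a subset $S'\subset S$, and two configurations $\bfrho,\bfeta$ on $S$ with $\bfrho(S\setminus S')=\bfeta(S\setminus S')$. Let $T=\Tsaw(G,v)$ and let $\widetilde S,\widetilde S'$ denote the sets of vertices in $T$ corresponding to occurrences of vertices in $S,S'$ respectively (recall the SAW tree has, after deletions, no remaining fixed leaves). First, I would transfer $\bfrho$ and $\bfeta$ to configurations on $\widetilde S$ in $T$ via the rule described just before Theorem \ref{thm:saw}: every occurrence $w\in\widetilde S$ of some $u\in S$ is assigned $\bfrho(u)$ (resp.\ $\bfeta(u)$). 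Since $\bfrho$ and $\bfeta$ agree on $S\setminus S'$, the transferred configurations agree on $\widetilde S\setminus\widetilde S'$.

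Second, I would apply Theorem \ref{thm:saw} to each of $\bfrho$ and $\bfeta$ to get the identities $\alpha_{G,v}^{\bfrho}=\alpha_{T,v}^{\bfrho}$ and $\alpha_{G,v}^{\bfeta}=\alpha_{T,v}^{\bfeta}$. Then I would invoke the hypothesis that SSM holds for $T$ at the root with rate $\gamma$, applied to the set $\widetilde S$ and the ``disagreement'' subset $\widetilde S'$, to conclude
\[
\bigl|\alpha_{T,v}^{\bfrho}-\alpha_{T,v}^{\bfeta}\bigr|\leq \gamma^{\dist_T(v,\widetilde S')}.
\]

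Third, I would verify the geometric inequality $\dist_T(v,\widetilde S')\geq \dist_G(v,S')$. This is the ``key'' step, although it is essentially immediate from the construction of $\Tsaw$: every path in $T$ starting at the root $v$ projects onto a self-avoiding walk in $G$ starting at $v$, so a path in $T$ from $v$ to some occurrence of $u\in S'$ has length at least the length of the shortest path in $G$ from $v$ to $u$. Taking the minimum over $u\in S'$ yields the claimed inequality. Combining with the previous display gives $|\alpha_{G,v}^{\bfrho}-\alpha_{G,v}^{\bfeta}|\leq \gamma^{\dist_G(v,S')}$, which is SSM for $G$ at $v$ with rate $\gamma$. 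Since $v$ is arbitrary, this proves the corollary.

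The main ``obstacle'', if any, is nothing deep; it is the bookkeeping of keeping track of the multiple occurrences of a single vertex of $S$ as vertices of $\widetilde S$, and checking that the SSM hypothesis on $T$ really does cover the configuration produced by this transfer (because we are allowed to prescribe \emph{any} configuration on any subset of $T$). Once that is verified, and once one notes that the SAW tree construction cannot shorten distances from the root, the result follows immediately from Theorem \ref{thm:saw}.
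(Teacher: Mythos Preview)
Your proof is correct and follows exactly the approach indicated in the paper: the corollary is stated there as an immediate consequence of Theorem~\ref{thm:saw} together with the observation that $\Tsaw(G,v)$ preserves (more precisely, does not shorten) distances from $v$. Your write-up simply spells out the bookkeeping that the paper leaves implicit.
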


The reverse implication of Corollary \ref{cor:weitz} does not hold
since there are configurations on $S$ in $\Tsaw$ which are not
necessarily realizable in $G$. Observe that if $G$ has maximum
degree $\Delta$, any SAW tree of $G$ is a subtree of the regular
tree of degree $\Delta$.

\subsection{Our Proof Approach}

 In summary, Weitz \cite{Weitz} first shows (via Theorem \ref{thm:saw})
 that to prove SSM holds on a graph $G=(V,E)$,
 it suffices to prove SSM
 holds on the trees $\Tsaw(G,v)$, for all $v\in V$.
 Weitz then proves that the regular tree $\T_\Delta$ ``dominates''
 every tree of maximum degree $\Delta$
 in the sense that, for all trees of maximum degree $\Delta$,
 SSM holds when $\lambda<\lambda_c(\T_\Delta)$.
 We refine this second part of Weitz's approach.
  In particular, for graphs with extra structure, such as
  $G=\integers^2$, we bound $\Tsaw(\integers^2)$ by a tree $T^*$ that
is much closer to it than the regular tree $\T_\Delta$. We then
establish a criterion that achieves better bounds on SSM for trees
when the trees have extra structure.

The tree $T^*$ will be constructed in a regular manner so that we
can prove properties about it --  the construction of $T^*$ is
governed by a (progeny) $t\times t$ matrix $\bM$, whose rows
correspond to $t$ types of vertices, with the entry $M_{ij}$
specifying the number of children of type $j$ that a vertex of type
$i$ begets. We will then show a sufficient condition using entries
of $\bM$ which implies that SSM holds for $T^*$ and for any subgraph
of $T^*$, including $\Tsaw(\integers^2)$. The construction of $T^*$
is reminiscent of the strategy employed in \cite{A93,PT00} to upper
bound the connectivity constant of several lattice graphs, including
$\mathbb{Z}^2$. The derivation of our sufficient condition has some
inspiration from belief propagation algorithms.

As a byproduct of our proof that our new criterion implies SSM for
$T^*$, we get a new (and simpler) proof of the second part of
Weitz's approach, namely, that for all trees of maximum
degree~$\Delta$,
 SSM holds when $\lambda<\lambda_c(\T_\Delta)$.

\section{Branching Matrices and Strong Spatial Mixing}

As alluded to above,  we will utilize more structural properties of
self-avoiding walk trees. To this end, we consider families of trees
which can be recursively generated by certain rules; we then show
that such a general family is also analytically tractable.

\subsection{Definition of Branching Matrices}
\label{sec:branching-trees}

We say that the matrix $\bM$ is a $t\times t$ branching matrix if
every entry $M_{ij}$ is a non-negative integer. We say the maximum
degree of $\bM$ is $\Delta = \Delta(\bM) = \max_{1\le i\le t}
\sum_{1\le j\le t} M_{ij}$, the maximum row sum. Given a branching
matrix $\bM$, we define the following family of graphs. In essence,
it includes a graph $G$ if the self-avoiding walk trees of $G$ can
be generated by $\bM$.
\begin{definition}[Branching Family]
\label{def:branching-matrix} Given a $t\times t$ branching matrix
$\bM$, $\mathcal F_{\leq\bM}$ includes trees which can be generated
under the following restrictions:
\begin{itemize}
\item[$\circ$] Each vertex in tree $T\in \mathcal F_{\leq \bM}$ has its type $i\in \{1,\dots,
t\}$.
\item[$\circ$] Each vertex of type $i$ has at most $M_{ij}$ children of type $j$.
\end{itemize}
In addition, we use the notation $G=(V,E)\in\mathcal F_{\leq\bM}$
if $\Tsaw(G,v)\in \mathcal F_{\leq \bM}$ for all $v\in V$.
\vspace{0.1in}
\end{definition}

For example, the family $\mathcal F_{\leq \bM}$ with $\bM=[\Delta]$
includes the family of trees with maximum branching $\Delta$. On the
other hand, $\mathcal F_{\leq \bM}$ with $\bM=\begin{pmatrix}0 &
\Delta+1\\0 & \Delta\end{pmatrix}$ describes the family of graphs of
maximum degree $\Delta+1$, by assigning the root of tree
$T\in\mathcal F_{\leq \bM}$ to be of type 1 and the other vertices
of the tree to be of type 2. Note that if $\bM$ has maximum degree
$\Delta$, then every $G\in\FM$ also has maximum degree $\Delta$.

In this framework, Weitz's result establishing SSM for all graphs of
maximum degree $\Delta$ when $\lambda<\lambda_c(\T_\Delta)$ can be
stated as establishing SSM with uniform rate for all $G\in \mathcal
F_{\leq\bM}$ with $\bM=\begin{pmatrix}0 & \Delta\\0 & \Delta -
1\end{pmatrix}$; and we are interested in establishing its analogy
for general $\bM$. To this end, we will use the following notion of
SSM for $\bM$.

\begin{remark}
\label{rem:single-type} To establish SSM for $\bM$, it suffices to
prove that SSM holds with uniform rate for all trees in $\mathcal
F_{\leq \bM}$ due to Corollary \ref{cor:weitz}. In addition, note
that SSM holds for $\bM=\begin{pmatrix}0 & \Delta+1\\0 &
\Delta\end{pmatrix}$ if and only if it holds for $(\Delta)$ since
the root of a tree $T\in\mathcal F_{\leq \bM}$ is the only possible
vertex of type 1 in $T$.
\end{remark}

Finally, we define SSM for a branching matrix $\bM$.

\begin{definition}
Given a branching matrix $\bM$, we say SSM holds for $\bM$ if SSM
holds with uniform rate for all $G\in \mathcal F_{\leq\bM}$.
\end{definition}

\begin{remark}
\label{rem:SSM} To establish SSM for $\bM$, it suffices to prove
that SSM holds with uniform rate for all trees in $\mathcal F_{\leq
\bM}$ due to Corollary \ref{cor:weitz}.
\end{remark}

\subsection{Implications of SSM}
\label{sec:implications}

We present a new approach for proving SSM for a branching matrix
$\bM$.  There are multiple consequences of SSM for  $\bM$ as
summarized in the following theorem. We first state some definitions
needed for stating the theorem.

Following Goldberg et al. \cite{GMP} we use the following variant of
amenability for infinite graphs. Here we consider an infinite graph
$G=(V,E)$. For $v\in V$ and a non-negative integer $d$, let
$\Ball_d(v)$ denote the set of vertices within distance $\le d$ from
$v$, where distance is the length of the shortest path. For a set of
vertices $S$, the (outer) boundary and neighborhood amenability are
defined, respectively,  as:
\[ \partial S := \{ w\in V: w\notin S, \mbox{ and $w$ has a neighbor $y\in S$}\} \ \ \mbox{ and }
 \ \  r_d = \sup_{v\in V} \frac{|\partial\Ball_d(v)|}{|\Ball_d(v)|}\,.
\]
The infinite graph is said to be {\em neighborhood-amenable} if
$\inf_d r_d = 0$.

Now we can state the following theorem detailing the implications of
SSM of interest to us.

\begin{theorem}
\label{thm:implications} For a $t\times t$ branching matrix  $\bM$,
if SSM holds for $\bM$ then the following hold:
\begin{enumerate}
\item
\label{imp:SSM} For every $G\in\FM$, SSM holds on $G$.
\item
\label{imp:unique} For every infinite graph $G\in\FM$, there is a
unique infinite-volume Gibbs measure on $G$.
\item
\label{imp:FPAS} If $\bM$ has maximum degree $\Delta$, if $t=O(1)$
and $\Delta=O(1)$, then for every (finite) $G\in\FM$, Weitz's
algorithm \cite{Weitz} gives an FPAS for approximating the partition
function $Z(G)$.
\item
\label{imp:Glauber} For every infinite $H\in\FM$ which is
neighborhood-amenable, for every finite subgraph $G=(V,E)$ of $H$,
the Glauber dynamics has $O(n^2)$ mixing time. Moreover, if
$H=\integers^d$ for constant $d$, then for every finite subgraph
$G=(V,E)$ of $H$, the Glauber dynamics has $O(n\log{n})$ mixing
time.
\end{enumerate}
\end{theorem}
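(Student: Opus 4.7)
The plan is to dispatch each of the four implications separately, largely by reducing to the tree case and appealing to existing machinery.

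Part~\ref{imp:SSM} is essentially immediate from the definition combined with Corollary~\ref{cor:weitz}: SSM for $\bM$ gives SSM with uniform rate $\gamma$ on every tree $T \in \FM$, and for any $G \in \FM$ and $v \in V$, the tree $\Tsaw(G,v)$ lies in $\FM$ by definition, so SSM holds at its root with rate $\gamma$. Theorem~\ref{thm:saw} and the distance-preserving property of $\Tsaw$ then transfer SSM back to $G$ at $v$, with the same rate $\gamma$.

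For Part~\ref{imp:unique}, I would use the standard argument that exponential SSM on finite subgraphs forces uniqueness of the infinite-volume Gibbs measure. Given $G \in \FM$ infinite and two Gibbs measures $\mu_1, \mu_2$ satisfying the DLR equations on $G$, fix $v$ and a radius $R$. The finite ball $B_R(v) \subseteq G$ is itself in $\FM$ (since the SAW tree of a subgraph is a subtree of the SAW tree of $G$, hence in $\FM$), so by Part~\ref{imp:SSM} the marginal at $v$ in $B_R(v)$ depends on any boundary conditioning only up to an error $\gamma^R$. Integrating against $\mu_i$'s measure on the boundary of $B_R(v)$ and letting $R \to \infty$ yields $\mu_1(\sigma(v) = 0) = \mu_2(\sigma(v) = 0)$ for every $v$, which pins down both measures; see Georgii \cite{Georgii} for the formalities.

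For Part~\ref{imp:FPAS}, I would run Weitz's algorithm verbatim. By self-reducibility, approximating $Z(G)$ reduces to approximating $O(n)$ conditional marginals $\alpha_{G,v}^{\bfrho}$; by Theorem~\ref{thm:saw} each such marginal equals $\alpha_{T,v}^{\bfrho}$ on $T = \Tsaw(G,v)$. Truncate $T$ at depth $L = O(\log(n/\varepsilon)/\log(1/\gamma))$ and compute the marginal on the truncated tree by the usual tree recursion; SSM bounds the truncation error by $\gamma^L \le \varepsilon/\poly(n)$, and since $\Delta$ and $t$ are constants, the truncated tree has size $\Delta^L = \poly(n/\varepsilon)$, so the overall running time is polynomial.

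For Part~\ref{imp:Glauber}, the mixing-time bounds reduce to citing known general results once Part~\ref{imp:SSM} is in hand. Any finite subgraph $G$ of an infinite $H \in \FM$ is itself in $\FM$ (by the subgraph argument used in Part~\ref{imp:unique}), so SSM holds uniformly on all such $G$. When $H$ is neighborhood-amenable, the framework of Dyer, Sinclair, Vigoda, and Weitz \cite{DSVW} turns SSM on finite subregions into $O(n^2)$ mixing for the Glauber dynamics on $G$. For $H = \integers^d$ with $d = O(1)$, the stronger log-Sobolev machinery of Martinelli--Olivieri \cite{MO1,MO2} and Cesi \cite{Cesi} upgrades this to $O(n \log n)$. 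The main subtlety I anticipate is checking that the amenability/regularity hypotheses in those papers, stated for the $\integers^d$ setting, genuinely transfer to general neighborhood-amenable $H \in \FM$ for the weaker $O(n^2)$ bound; this is essentially what \cite{DSVW} is tailored to handle, so little should need to be done beyond a careful invocation.
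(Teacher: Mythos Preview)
Your overall strategy matches the paper's: each part is dispatched by reduction to known results, with Part~\ref{imp:SSM} immediate from the definition (the paper states it in one line; your unpacking via Corollary~\ref{cor:weitz} is fine but unnecessary, since SSM for $\bM$ is \emph{defined} as SSM with uniform rate for all $G\in\FM$), and Parts~\ref{imp:unique}--\ref{imp:FPAS} handled essentially as you describe.

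The one place where your proposal has a real gap is Part~\ref{imp:Glauber}: you have the references crossed. The Martinelli--Olivieri/Cesi machinery you invoke for the $O(n\log n)$ bound on $\integers^d$ is stated for spin systems \emph{without} hard constraints, so it does not directly cover the hard-core model; the paper instead uses \cite{DSVW}, which gives a direct combinatorial argument for hard-core on $\integers^d$ exploiting monotonicity. Conversely, \cite{DSVW} is specific to $\integers^d$ and does \emph{not} furnish the $O(n^2)$ bound for general neighborhood-amenable $H$---so the subtlety you flag is real, but your proposed resolution (``this is essentially what \cite{DSVW} is tailored to handle'') is incorrect. The paper obtains the $O(n^2)$ bound by citing Goldberg, Martin, and Paterson \cite{GMP}, whose argument (written for $k$-colorings on neighborhood-amenable graphs) transfers to hard-core. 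Swap the roles of \cite{DSVW} and the log-Sobolev references, and add \cite{GMP} for the amenable case, and your argument goes through.
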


\begin{proof}
Part \ref{imp:SSM} is by the definition of SSM for $\bM$. The
uniqueness result follows from the fact that the infinite-volume
extremal Gibbs measures on the infinite graph $G$ can be obtained by taking
limits of finite measures, see Georgii \cite{Georgii} for an
introduction to infinite-volume Gibbs measures, and see Martinelli
\cite{Marlec} for Part \ref{imp:unique}. Part \ref{imp:FPAS}
immediately follows from the work of Weitz \cite{Weitz}. Finally,
for Part \ref{imp:Glauber}, there is a long line of work showing
that for the integer lattice $\integers^d$ in fixed dimensions, for
the Ising model SSM on $\integers^d$ implies $O(n\log{n})$ mixing
time of the Glauber dynamics on finite subregions of $\integers^d$,
e.g., see Cesi \cite{Cesi} and Martinelli \cite{Marlec} (and the
references therein) for recent results on this problem.  These
results for the Ising model are typically stated for a general class
of models, but that class does not include models with hard
constraints, such as the hard-core model studied here.  Dyer et al.
\cite{DSVW} showed a simpler proof for the hard-core model that
utilizes the monotonicity of the model. We use this result of
\cite{DSVW} in Theorem \ref{thm:z2} to get $O(n\log{n})$ mixing time
for subregions of $\integers^2$.  Goldberg et al. \cite[Theorem
8]{GMP} showed that for $k$-colorings, if SSM holds for an infinite graph $G$
that is neighborhood-amenable, the Glauber dynamics has $O(n^2)$
mixing time for all finite subgraphs of $G$.  Their proof holds for
the hard-core model which implies Part~\ref{imp:Glauber}.
\end{proof}

\section{Establishing SSM for Branching Matrices}
\label{sec:reg}

In this section we present a sufficient condition implying SSM for
the family of trees generated by a branching matrix. As a
consequence of the approach presented in this section we get a
simpler proof of Weitz's result \cite{Weitz} implying SSM for all
graphs with maximum degree $\Delta$ when
$\lambda<\lambda_c(\T_\Delta)$. We then apply the condition
presented in this section to $\mathbb{Z}^2$ in Section
\ref{sec:grid}.

To show the decay of influence of a boundary condition $\bfrho$, a
common strategy is to prove some form of contraction for the
`one-step' iteration given in \eqref{eq:one-step1} below. More generally,
we will prove such a contraction for an appropriate set of
`statistics' of the unoccupied marginal probability.

A {\em statistic} of the univariate parameter $x\in\lbrack a,b]$ is
a monotone (i.e., strictly increasing or decreasing) function
$\varphi:[a,b]\rightarrow\mathbb{R}$. For a $t\times t$ branching
matrix $\bM$ we consider a set of $t$ statistics
$\varphi_1,\dots,\varphi_t$, one for each type.
For the simpler case when $\bM=[\Delta]$ and
hence $t=1$, we have a single statistic $\varphi$. Our aim is proving
contraction for an appropriate set of statistics of the probability
that the root of a tree is unoccupied.

We first focus on the case of a single type.
Consider a tree $T=(V,E)\in \mathcal F_{\leq\bM}$ with root $r$.
For $v\in V$, let
$N(v)$ denote the children of $v$,
and let $d(v):=|N(v)|$ the number of children.
Let $T_v$ denote the subtree rooted at $v$. We will
analyze the unoccupied probability for a vertex $v$, but $v$ will
always be the root of its subtree.  Hence, to simplify the notation, for
a boundary condition $\bfrho$ on $S\subset V$, let
$\alpha_{v}^{\bfrho}= \alpha_{T_v,v}^{\bfrho}$.

A straightforward recursive calculation with the partition function
leads to the following relation:
\begin{equation}
\alpha^{\bfrho}_{v}=\left\{
\begin{array}
[c]{cc} \frac{1}{1+\lambda} & \mbox{if }N(v)=\emptyset
\vspace{0.05in}\\
\frac1{1+\lambda\prod_{w\in N(v)}\alpha_{w}^{\bfrho}  } &
\mbox{otherwise.$~~$}
\end{array}\right.
\label{eq:one-step1}
\end{equation}
Note, the unoccupied probability always lies in the
interval $I:=\left[\frac1{1+\lambda},1\right]$, i.e., for all $v$, all $\rho$,
$\alpha_v^\rho \in I$.

For $v\in V$, let $m_v^{\bfrho} :=
\varphi(\alpha^{\bfrho}_v)$ be the `message' at vertex $v$.
The messages satisfy the following recurrence:
$$m_v^{\bfrho} =
\varphi\left(\frac1{1+\lambda\prod_{w\in
N(v)}\alpha_w^{\bfrho}}\right)
=\varphi\left(\frac1{1+\lambda\prod_{w\in
N(v)}\varphi^{-1}(m_w^{\bfrho})}\right).$$

Our aim is to prove uniform contraction of the messages on
all trees $T\in\FM$.
To this end, we will consider a more general set of messages.
Namely, we consider
messages $m_1,\dots,m_\Delta$ where
for every $1\le i\le\Delta$,
$m_i=\phi(\alpha_i)$ and $\alpha_i\in I:=\left[\frac1{1+\lambda},1\right]$.
This set of tuples
$\alpha_1,\dots,\alpha_\Delta\in I$
contains all of the tuples obtainable on a tree.

For $\alpha_1,\dots,\alpha_\Delta\in I$, let
$m_i=\phi(\alpha_i), 1\le i\le\Delta$, and
let $$F(m_1,\ldots,m_{\Delta}):=\varphi\left(
\frac1{1+\lambda\prod_{i=1}^{\Delta}\varphi^{-1}(m_i)}\right).$$

Ideally, we would like to establish the following contraction:
there exists a $0<\gamma<1$ such that for all
$\alpha_1,\dots,\alpha_\Delta,\alpha'_1,\dots,\alpha'_\Delta\in I$,
\[ |F(m_1,\dots,m_\Delta) - F(m'_1,\dots,m'_\Delta)|
\leq\gamma \max_{1\le i\le\Delta} |m_i - m'_i|,
\]
where
$m_i=\phi(\alpha_i)$ and $m'_i=\phi(\alpha'_i)$.
We will instead show that
the following weaker condition suffices.
Namely, that
the desired contraction holds for all $|\alpha_i-\alpha'_i|\leq\eps$ for
some $\eps>0$.
This is equivalent to the following condition.

\begin{definition}
Let $I=\left[\frac1{1+\lambda},1\right]$. For the branching matrix
$\bM=[\Delta]$, we say that {\em Condition~($\simplerstarcond$)}  is
satisfied if for all $\alpha_1,\ldots,\alpha_{\Delta}\in
I$, by setting  $m_i=\varphi(\alpha_i)$ for  $1\le i\le\Delta$, the
following holds:
\[\tag{$\star$}
 \left\Vert \nabla F\left(
m_1,\ldots,m_{\Delta}\right) \right\Vert _{1}
=
\sum_{i=1}^{\Delta} \left\vert \frac{\partial
F\left(m_1,\ldots,m_{\Delta}\right)}{\partial m_{i}} \right\vert
<1.
\]
\end{definition}

Let us now consider a natural generalization of the above notion
for a branching matrix with multiple types.
Let $\bM$ be a $t\times t$ branching matrix.  For $1\le \ell\le t$,
let $\Delta_\ell=\sum_{k=1}^t M_{\ell k}$ denote the maximum number
of children of a vertex of type $\ell$.
Once again, consider a tree $T=(V,E)\in \mathcal F_{\leq\bM}$ with root $r$.
For $v\in V$, let $t(v)$ denote its type.
As before, $N(v)$ are the children of $v$, $d(v)$ is the number of children of
$v$, and for a boundary condition $\bfrho$ on $S\subset V$,
 $\alpha_{v}^{\bfrho}$ is the unoccupied probability for $v$ in
 the tree $T_v$ under $\rho$.

 The recursive calculation in \eqref{eq:one-step1} for
 $\alpha_v$ in terms of $\alpha_w, w\in N(v)$, still holds.
For the case of multiple types, for $v\in V$, let $m_v^{\bfrho} :=
\varphi_{t(v)}(\alpha^{\bfrho}_v)$ be the message at vertex $v$.
The messages satisfy the following recurrence:
$$m_v^{\bfrho} =\varphi_{t(v)}\left(\frac1{1+\lambda\prod_{w\in
N(v)}\varphi^{-1}_{t(w)}(m_w^{\bfrho})}\right).$$

For each type $1\le \ell\le t$, we consider contraction of
messages derived from all $\alpha_1,\dots,\alpha_{\Delta_\ell}\in I$.
We need to identify the type of each these quantities $\alpha_i$
in order to determine the appropriate statistic to apply.
The assignment of types needs to be consistent with the
branching matrix $\bM$.  Hence, let
$s_\ell:\{1,\dots,\Delta_\ell\}\rightarrow\{1,\dots,t\}$ be the following assignment.
Let $M_{\ell,\le 0} = 0$ and for $1\le i\le t$, let $M_{\ell,\le i} = \sum_{k=1}^{i} M_{\ell,k}$.
For $1\le i\le t$, for $M_{\ell,\le i-1}< j\le M_{\ell,\le i}$, let $s_\ell(j) = i$.

For type $1\le \ell\le t$, for $\alpha_1,\dots,\alpha_{\Delta_\ell}\in I$,
set $m_j = \phi_{s_\ell(j)}(\alpha_j), 1\le j\le \Delta_\ell$, and
let
$$F_{\ell}(m_1,\ldots,m_{\Delta_\ell}):=\varphi_\ell\left(
\frac1{1+\lambda\prod_{j=1}^{\Delta_\ell}\varphi^{-1}_{s_\ell(j)}(m_j)}\right).$$

Note,
\begin{equation}
\label{eq:rec-F}
m^{\bfrho}_v=F_{t(v)}\left(m^{\bfrho}_{w_1},\ldots,m^{\bfrho}_{w_{d(v)}}\right)
\quad\mbox{where}\quad N(v) = \{w_1,\dots,w_{d(v)}\}.\footnote{Strictly speaking, $F_\ell$ requires $\Delta_\ell$ arguments, so
for \eqref{eq:rec-F} to hold in the case when $d(v)<\Delta_\ell$ we
can simply add additional arguments corresponding to $\alpha=1$,
which fixes these additional vertices to be unoccupied (and therefore absent).}
\end{equation}

We generalize Condition~($\simplerstarcond$)
to branching matrices with
multiple types by allowing a weighting of the types by parameters
$c_1,\dots,c_t$.

\begin{definition}
Let $I=\left[\frac1{1+\lambda},1\right]$. For a $t\times t$
branching matrix $\bM$,  we say that  {\em Condition~($\starcond$)}
is satisfied if there exist $c_1,\dots,c_t$,
such that for all $1\le \ell\le t$, for all
$\alpha_1,\ldots,\alpha_{\Delta_\ell}\in I$,
by setting $m_i=\varphi_{s_\ell(i)}(\alpha_i)$ for  $1\le i\le \Delta_\ell$,
the following holds:
\[\tag{$\starcond$}
\sum_{i=1}^{\Delta_\ell}c_{s_\ell(i) }\left\vert \frac{\partial
F_{\ell}\left(m_1,\ldots,m_{\Delta_\ell}\right)}{\partial m_{i}} \right\vert
<c_{\ell}.
\]
\end{definition}

The following lemma establishes a sufficient condition so that SSM
holds for~$\bM$.

\begin{lemma}
\label{pro:recM} For a $t\times t$ branching matrix $\bM$, if for
every $1\le \ell\le t$, $\varphi_{\ell}$ is continuously
differentiable on the interval $I=\left[\frac1{1+\lambda},1\right]$
and $\inf\limits_{x\in I} |\varphi_\ell^{\prime}(x)|>0$, and if
Condition ($\simplerstarcond$) is satisfied for $t=1$ or Condition
($\starcond$) is satisfied for $t\geq 2$ then SSM holds for $\bM$,
and hence the conclusions of Theorem \ref{thm:implications} follow.
\end{lemma}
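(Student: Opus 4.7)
The plan is to combine Observation~\ref{obs:SSM-WSM}, Remark~\ref{rem:SSM}, and Corollary~\ref{cor:weitz}: to prove SSM for $\bM$ it is enough to show SSM at the root of every tree in $\FM$, which in turn reduces to WSM at the root of every tree in $\FM$ (each ``subgraph'' obtained by pinning vertices is again a tree in $\FM$, since retaining a subset of vertices only restricts children, and distances in subgraphs only grow). Once WSM is established with a rate $\gamma \in (0,1)$ that depends only on $\bM$ and $\lambda$, we recover SSM with the same rate.

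Fix then $T \in \FM$ with root $r$, a boundary $S \subseteq V(T)$, and two configurations $\bfrho, \bfeta$ on $S$. Encode marginals through the messages $m_v^\bfrho := \phi_{t(v)}(\alpha_v^\bfrho)$ and set $D(v) := |m_v^\bfrho - m_v^\bfeta|$. Combining the recurrence in~\eqref{eq:rec-F} (padded by $\alpha = 1$ arguments, per the footnote, when $d(v) < \Delta_{t(v)}$) with the multivariate mean value theorem yields
\[ D(v) \;\leq\; \sum_{i=1}^{d(v)} \Bigl| \tfrac{\partial F_{t(v)}}{\partial m_i} \Bigr|(\xi_v) \cdot D(w_i), \]
for some $\xi_v$ on the segment joining the two child-message tuples.

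The crux is upgrading the pointwise strict inequality in $(\starcond)$ (or in $(\simplerstarcond)$ with $c_1 = 1$ when $t=1$) to a uniform contraction factor. The hypotheses $\phi_\ell \in C^1$ and $\inf_I |\phi_\ell'| > 0$ imply that each $\phi_\ell^{-1}$ is $C^1$ on $\phi_\ell(I)$, hence every $\partial F_\ell / \partial m_i$ is a continuous function of $(\alpha_1, \ldots, \alpha_{\Delta_\ell}) \in I^{\Delta_\ell}$. The extreme value theorem on the compact cube $I^{\Delta_\ell}$ then gives
\[ \gamma \;:=\; \max_{1 \leq \ell \leq t} \sup_{I^{\Delta_\ell}} \frac{1}{c_\ell} \sum_{i=1}^{\Delta_\ell} c_{s_\ell(i)} \Bigl| \tfrac{\partial F_\ell}{\partial m_i} \Bigr| \;<\; 1. \]
Setting $\tilde D(v) := D(v)/c_{t(v)}$, the one-step bound rearranges to $\tilde D(v) \leq \gamma \, \max_{i} \tilde D(w_i)$ at every internal $v$.

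A straightforward induction from the leaves up to $r$ (with $\tilde D = 0$ at non-boundary leaves and $\tilde D$ bounded on $S$ by a universal constant $C_0$ depending only on $\lambda$ and $\bM$) gives $\tilde D(r) \leq C_0 \, \gamma^{\dist(r,S)}$; applying the Lipschitz bound on $\phi_{t(r)}^{-1}$ (which again follows from $\inf |\phi_{t(r)}'| > 0$) transfers this exponential decay to $|\alpha_r^\bfrho - \alpha_r^\bfeta|$, yielding WSM at any rate $\gamma' \in (\gamma, 1)$ after absorbing the constants into the exponential. The main obstacle I anticipate is exactly the compactness-plus-continuity step used to produce $\gamma$: without the regularity of the statistics, the strict pointwise inequality in $(\starcond)$ need not be uniform on $I^{\Delta_\ell}$, and without uniformity the induction would fail to produce exponential decay. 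Everything else, including the reduction from SSM to WSM and the chain-rule inequality, is routine once $\gamma$ is in hand.
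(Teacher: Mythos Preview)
Your approach is broadly correct and runs parallel to the paper's: both extract a uniform contraction constant from Condition~$(\starcond)$ via compactness and iterate it down the tree. Two differences are worth noting. The paper first invokes the monotonicity of the hard-core model on bipartite graphs to reduce WSM to the two extremal boundaries (all occupied versus all unoccupied at level $L$), then introduces a one-parameter interpolation $\beta_{L,r}(y)$, $y\in[0,1]$, between them and bounds $\int_0^1 |\partial_y m_{L,r}(y)|\,dy$ by induction on $L$. You instead apply the multivariate mean value theorem directly to an arbitrary pair of boundary configurations; this is more elementary and sidesteps the monotonicity appeal, at the cost of tracking an intermediate point $\xi_v$ on each segment (which is harmless since each $\phi_\ell(I)$ is an interval).

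There is, however, a genuine gap at your base case. At a boundary vertex $v\in S$ fixed to be occupied one has $\alpha_v=0\notin I$, so $m_v=\phi_{t(v)}(0)$ need not even be defined; for the statistic $\phi(x)=\tfrac{1}{s}\log\tfrac{x}{s-x}$ used downstream, $\phi(0)=-\infty$, so $\tilde D$ is certainly not bounded on $S$ by any universal $C_0$. For the same reason, your MVT step for $D(v)$ is only valid when every child message lies in the appropriate $\phi_{s_\ell(i)}(I)$, which fails precisely when some child of $v$ belongs to $S$. The paper handles this by treating $L=1$ as a separate base case, bounding $|\partial_y m_{1,r}(y)|$ directly from the explicit formula $m_{1,r}(y)=\phi_{t(r)}\bigl((1+\lambda y^{d(r)})^{-1}\bigr)$ without ever applying $\phi$ to the boundary values. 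You need the analogous move: for each vertex $v$ whose children lie in $S$, bound $D(v)$ directly by, say, $\sup_I|\phi_{t(v)}'|\cdot \tfrac{\lambda}{1+\lambda}$ (since $\alpha_v^{\bfrho},\alpha_v^{\bfeta}\in I$ regardless of the children's states), and start the contraction induction one level above $S$.
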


\begin{proof}
For a tree $T=(V,E)$ with root $r$, let $\alpha^{+}_{L,r}$ and
$\alpha^{-}_{L,r}$ denote the marginal probabilities that the root
of $T$ is unoccupied conditional on the vertices at level $L$ (i.e.,
distance $L$ from the root) being occupied and unoccupied,
respectively.

The main result for proving Lemma \ref{pro:recM} is that there exist
$\gamma<1$ and $L_0<\infty$ such that for every tree $T\in \mathcal
F_{\leq \bM}$
 and every integer $L\geq L_0$,
\begin{equation}\label{eq:wsm-general}
\left|\alpha^{+}_{L,r}-\alpha^{-}_{L,r}\right|
~\leq~\gamma^L.\end{equation}

We first explain why \eqref{eq:wsm-general} implies Lemma
\ref{pro:recM} and then we prove \eqref{eq:wsm-general}. Consider a
tree $T=(V,E)$ with root $r$, and a boundary condition $\bfrho$ on
$S\subset V$. Set $L=\dist(r,S)$ as the distance of $S$ to the root
of $T$. The hard-core model on bipartite graphs has a monotonicity
of boundary conditions (c.f., \cite{DSVW}) which implies that for
odd $L$, $\alpha^{+}_{L,r}\geq \alpha_r^{\bfrho} \geq
\alpha^{-}_{L.r}$, and for even $L$, $\alpha^{+}_{L,r}\geq
\alpha_r^{\bfrho} \geq \alpha^{-}_{L,r}$. Hence, for any pair of
boundary conditions $\bfrho$ and $\bfeta$ on $S$,
\[
\left|\alpha_r^{\bfrho} - \alpha_r^{\bfeta}  \right| \leq
\left|\alpha^{+}_{L,r}-\alpha^{-}_{L,r}\right|.
\]
Therefore, by the definition of WSM in Definition \ref{def:wsm},
proving \eqref{eq:wsm-general} implies WSM for $T$. Since this holds
for all $T'\in  \mathcal F_{\leq \bM}$, by Observation
\ref{obs:SSM-WSM}, it implies SSM for all $T'\in \mathcal F_{\leq
\bM}$, which, by Remark~\ref{rem:SSM}, implies SSM for $\bM$.

We now turn our attention to proving \eqref{eq:wsm-general}. Fix a
$t\times t$ branching matrix $\bM$ and consider a tree $T=(V,E)\in
\mathcal F_{\leq \bM}$ with root $r$. Given $y\in [0,1]$, let
$\beta_{L,v}(y)$ denote the marginal probability that the root of
$T_v$ is unoccupied given all of the vertices at level $L$ (in
$T_v$) are assigned marginal probability $y$ of being unoccupied
(conditional on its parent being unoccupied). Intuitively,
$\beta_{L,v}(y)$ can be thought as the marginal probability
conditioned on a `fractional' boundary configuration at level $L$.
As in \eqref{eq:one-step1}, $\beta_{L,r}(y)$ satisfies the following
recurrence for $y\in [0,1]$:
\begin{equation}\label{eq:defalphau-1}
\beta_{L,r}\left(y\right)  =\left\{
\begin{array}{cc}
y & \text{if } L=0,$\qquad\qquad\qquad~$ \vspace{0.05in}\\
\frac{1}{1+\lambda} & \mbox{if } L>0 \mbox{ and } N(r)=\emptyset,
\vspace{0.05in}\\
\frac{1}{1+\lambda\prod_{w\in N(r)}\beta_{L-1,w}\left( y\right)} &
\text{otherwise}.$\qquad\qquad\quad~$
\end{array}\right.
\end{equation}

From \eqref{eq:defalphau-1} and \eqref{eq:one-step1},
it follows that $\alpha^{+}_{L,r}= \beta_{L,r}\left(  1\right)$ and
$\alpha^{-}_{L,r}= \beta_{L,r}\left(  0\right)$. Hence, in order to
analyze the messages for $\alpha^{+}_{L,r}$ and $\alpha^{-}_{L,r}$,
we will analyze the messages for $\beta_{L,r}(y)$. Therefore, for
$v\in V$, let $m_{L,v}\left(y\right)=\varphi_{t(v)}\left(
\beta_{L,v}\left( y\right)  \right)$. Analogous to \eqref{eq:rec-F},
we now have that:
\[
m_{L,r}(y)=F_{t(r)}\left(
m_{L-1,w_1}\left(y\right),\ldots,m_{L-1,w_{d(r)}}\left(y\right)\right)
\qquad\mbox{where}\quad N(r) = \{w_1,\dots,w_{d(r)}\}.
\]
Observe that for all $y\in[0,1]$, all $L>0$, all $v\in V$,
$\beta_{L,v}\left( y\right) \in I =\left[ \frac1{
1+\lambda},1\right]$, and hence we can use Condition~($\starcond$)
to analyze $m_{L,r}$.

 Using the fact that $\beta_{L,v}\left( y\right)  $ and
$m_{L,v}\left(  y\right)  $ are continuously differentiable for
$y\in\left[  0,1\right]$, we have that for $L>0$,
\begin{eqnarray*}
\left\vert \alpha^{+}_{L,r}-\alpha^{-}_{L,r}
\right\vert&=&\left\vert \beta_{L,r}(1)-\beta_{L,r}(0) \right\vert
~\leq~\int_{0}^{1}\left\vert \frac{\partial \beta_{L,r}\left(
y\right)}{\partial y}
  \right\vert dy ~\leq~\frac{\int_{0}^{1}\left\vert
\frac{\partial m_{L,r}\left(  y\right)}{\partial y} \right\vert
dy}{\inf\limits_{x\in I }\left\vert \varphi^{\prime}_{t(r)}\left(
x\right)  \right\vert }.
\end{eqnarray*}

By the hypothesis of Lemma \ref{pro:recM}, we know that $
\left\vert\varphi^{\prime}_{t(r)}\left(  x\right)\right\vert >0$.
Therefore, to prove the desired conclusion \eqref{eq:wsm-general},
it suffices to prove that there exist constants $K<\infty$ and
$\eta<1$ such that for every tree $T\in  \mathcal F_{\leq \bM}$ with
root $r$, all $L>0$,
\begin{equation}
\left\vert \frac{\partial m_{L,r}\left(  y\right)}{\partial y}
\right\vert \leq c_{t(r)}K \eta^{L-1}. \label{eq:induc-1}
\end{equation}
Note that $K$ and $\eta$ should be independent of $T$ and $L$, but
may depend on $\lambda, \varphi_1,\ldots,\varphi_t$ and $c_1,\ldots,c_t$.
The constant $K$ will be the following:
\[
K~:=~\frac{\lambda\Delta\max\limits_{1\le \ell\le
t}\sup\limits_{x\in I }\left\vert \varphi^{\prime}_\ell\left(
x\right) \right\vert }{\min\limits_{1\le\ell\le t} c_\ell},
\]
and the constant $\eta$ will be the constant implicit in Condition
($\starcond$).

We will show \eqref{eq:induc-1} by induction on $L$. First we verify
the base case $L=1$. In this case,
\[ m_{L,r}(y)=
\varphi_{t(r)}\left( \beta_{L,r}\left( y\right)  \right) =
\varphi_{t(r)}\left(\frac1{1+\lambda y^{d(r)}}\right).
\]
Thus,
\begin{align*}
\left\vert \frac{\partial m_{L,r}\left(  y\right)}{\partial y}
\right\vert 
& = \left\vert \frac{\partial \varphi_{t(r)}\left(\frac1{1+\lambda
y^{d(r)}}\right)}{\partial y}
  \right\vert &
\mbox{ since $L=1$}
\\&\leq
\sup\limits_{x\in I }\left\vert \varphi^{\prime}_{t(r)}\left(
x\right) \right\vert \sup\limits_{y\in \left[ 0,1\right]
}\frac{\lambda d(r)y^{d(r)-1}}{\left( 1+\lambda y^{d(r)}\right)
^{2}}
 & \mbox{ by the chain rule}
\\&\leq
\sup\limits_{x\in I }\left\vert \varphi^{\prime}_{t(r)}\left(
x\right) \right\vert \lambda d(r)
\\
&\leq \sup\limits_{x\in I }\left\vert \varphi^{\prime}_{t(r)}\left(
x\right) \right\vert \lambda\Delta
\\
&\leq c_{t(r)} K
 & \mbox{ by the definition of $K$.}
\end{align*}
This completes the analysis of the base case.

Now we proceed toward establishing the necessary induction step
using the inductive hypothesis.
We have that
\begin{align}
\left\vert \frac{\partial m_{L,r}\left(y\right)}{\partial
y}\right\vert &=\left\vert \frac{\partial F_{t(r)}\left(
m_{L-1,w_1}\left(y\right),\ldots,m_{L-1,w_{d(r)}}\left(y\right)\right)}{\partial
y}\right\vert\nonumber
\\&=\left\vert \sum_{i=1}^{d(r)} \frac{\partial F_{t(r)}\left(
m_{1},\ldots,m_{d(r)}\right)}{\partial m_i}\cdot \frac{\partial
m_{L-1,w_i}(y)}{\partial y}\right\vert&\mbox{where
}m_i:=m_{L-1,w_i}(y)\nonumber
\\
&=\left\vert \sum_{i=1}^{d(r)} c_{t(w_i)}\frac{\partial F_{t(r)}\left(
m_{1},\ldots,m_{d(r)}\right)}{\partial m_i}\cdot
\frac1{c_{t(w_i)}}\frac{\partial m_{L-1,w_i}(y)}{\partial
y}\right\vert\nonumber 
  \end{align}
\begin{align}
&=\left\vert \sum_{i=1}^{d(r)} c_{t(w_i)}\frac{\partial
F_{t(r)}\left( m_{1},\ldots,m_{d(r)}\right)}{\partial
m_i}\right\vert\nonumber\\
&\qquad\qquad\qquad\qquad\qquad\times \max_{1\leq i\leq
d(r)}\frac1{c_{t(w_i)}}\left\vert\frac{\partial
m_{L-1,w_i}(y)}{\partial y}\right\vert& \mbox{by H\"{o}lder's
inequality.} \label{ind-hyp}
  \end{align}
From ($\starcond$), there exists a universal constant $\eta<1$ such
that \[\left\vert \sum_{i=1}^{d(r)} c_{t(w_i)}\frac{\partial
F_{t(r)}\left( m_{1},\ldots,m_{d(r)}\right)}{\partial
m_i}\right\vert<\eta\, c_{t(r)}.\] Therefore, it follows that
\begin{align*}
\left\vert \frac{\partial m_{L,r}\left(y\right)}{\partial
y}\right\vert &\leq
 \eta\, c_{t(r)} \cdot \max_{1\leq i\leq
d(r)}\frac1{c_{t(w_i)}}\left\vert\frac{\partial
m_{L-1,w_i}(y)}{\partial y}\right\vert
& \mbox{by \eqref{ind-hyp} and the definition of $\eta$}\\
&\leq c_{t(r)} K\eta^{L-1}  & \mbox{by the inductive hypothesis.}
\end{align*}
 This completes the proof of \eqref{eq:induc-1}, and hence that of Lemma
\ref{pro:recM}.
\end{proof}

\subsection{Reproving Weitz's Result of SSM for Trees}

In this section, we aim at finding a good choice of statistics.
First we find such a statistic for the case $\bM=[\Delta]$, i.e.,
the case of a single type, which enables us to reprove Weitz's
result \cite{Weitz} that when $\lambda<\lambda_c(\T_\Delta)$ SSM
holds for every tree of maximum degree $\Delta$.

Using Lemma \ref{pro:recM} (and the simpler condition
($\simplerstarcond$) for the case of a single type) we obtain a
simpler proof of Weitz's result \cite{Weitz} that for every tree $T$
with maximum degree $\Delta+1$ (hence, for every graph $G$ of
maximum degree $\Delta+1$) and for all
$\lambda<\lambda_c(\T_{\Delta+1})=\Delta^{\Delta}/(\Delta-1)^{\Delta+1}$,
SSM holds on $T$ (and on~$G$).

\begin{theorem}
\label{thm:reprove-Weitz} Let $\varphi(x)=\frac{1}{s}\log\left(
\frac{x}{s-x}\right) $ where $s=\frac{\Delta+1}{\Delta}$. Then,
Condition ($\simplerstarcond$) holds for $\bM=[\Delta]$ and
$\lambda<\lambda_c(\T_{\Delta+1})$. Consequently, SSM and the
conclusions of Theorem \ref{thm:implications} hold for
$\bM=\begin{pmatrix}0 & \Delta+1\\0 & \Delta\end{pmatrix}$ and
$\lambda<\lambda_c(\T_{\Delta+1})$.
\end{theorem}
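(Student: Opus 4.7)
The plan is to invoke Lemma~\ref{pro:recM} with the given single statistic $\varphi$. First I would check the regularity hypothesis: since $s = (\Delta+1)/\Delta > 1$, the map $\varphi(x) = s^{-1}\log(x/(s-x))$ is continuously differentiable on $I = [1/(1+\lambda),1] \subset (0,s)$ with derivative $\varphi'(x) = 1/(x(s-x))$ strictly positive, and hence bounded away from zero on the compact interval $I$. The bulk of the work is thus to verify Condition~$(\simplerstarcond)$ when $\lambda < \lambda_c(\T_{\Delta+1})$; the statement for the block matrix $\bM = \begin{pmatrix}0 & \Delta+1 \\ 0 & \Delta\end{pmatrix}$ then follows from Remark~\ref{rem:single-type}, and the remaining assertions from Theorem~\ref{thm:implications}.

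Next I would carry out the chain-rule computation. Setting $\alpha = 1/(1+\lambda\prod_i\alpha_i)$ and using the identity $\lambda\prod_j\alpha_j = (1-\alpha)/\alpha$ yields $\partial\alpha/\partial\alpha_i = -\alpha(1-\alpha)/\alpha_i$. Combining this with $d\alpha_i/dm_i = 1/\varphi'(\alpha_i) = \alpha_i(s-\alpha_i)$ and $\varphi'(\alpha) = 1/(\alpha(s-\alpha))$, the factors telescope into the clean expression
\[
\left| \frac{\partial F}{\partial m_i}\right|
\;=\; \frac{\varphi'(\alpha)}{\varphi'(\alpha_i)}\cdot\frac{\alpha(1-\alpha)}{\alpha_i}
\;=\; \frac{(s-\alpha_i)(1-\alpha)}{s-\alpha}.
\]
Condition~$(\simplerstarcond)$ therefore reduces to the clean inequality
$(1-\alpha)\sum_{i=1}^{\Delta}(s-\alpha_i) < s-\alpha$
for all $(\alpha_1,\ldots,\alpha_\Delta)\in I^\Delta$.

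The most delicate step is the worst-case reduction to a one-parameter family. Since $\alpha$ depends on the $\alpha_i$ only through the product $P=\prod_i\alpha_i$, AM--GM gives $\sum_i(s-\alpha_i) = \Delta s - \sum_i\alpha_i \le \Delta(s - P^{1/\Delta})$, so it suffices to verify the inequality in the symmetric case $\alpha_i = u$ for $u \in I$, namely $\Delta(s-u)(1-\alpha) < s-\alpha$ with $\alpha = 1/(1+\lambda u^\Delta)$. Substituting $1-\alpha = \lambda u^\Delta\alpha$ and $s-\alpha = (s-1 + s\lambda u^\Delta)\alpha$ and using $s-1 = 1/\Delta$, this simplifies (after multiplying by $\Delta$) to
\[
g(u) \;:=\; \lambda\,u^\Delta\bigl[(\Delta^2-1) - \Delta^2 u\bigr] \;<\; 1.
\]
A short calculation shows that $g'$ has its unique zero on $(0,1)$ at $u^\star = (\Delta-1)/\Delta$, a maximum, with value $g(u^\star) = \lambda(\Delta-1)^{\Delta+1}/\Delta^\Delta = \lambda/\lambda_c(\T_{\Delta+1})$. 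Hence $g<1$ on $[0,1]\supset I$ precisely when $\lambda < \lambda_c(\T_{\Delta+1})$, establishing Condition~$(\simplerstarcond)$. The conceptual crux is the alignment inherent in the choice $s = (\Delta+1)/\Delta$: this value is exactly what both produces the telescoping in the derivative computation (giving the clean $(s-\alpha_i)(1-\alpha)/(s-\alpha)$) and forces the critical point of the reduced function $g$ to coincide with the hard-core tree fixed point $(\Delta-1)/\Delta$ at $\lambda = \lambda_c(\T_{\Delta+1})$; once this alignment is noticed, the argument is one chain-rule calculation followed by elementary optimization.
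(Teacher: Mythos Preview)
Your argument is correct and follows the same architecture as the paper: the identical partial-derivative formula $|\partial F/\partial m_i| = (1-\alpha)(s-\alpha_i)/(s-\alpha)$, the same AM--GM reduction to a one-parameter problem, and then a one-variable optimization. The only difference is in that last step. The paper parameterizes by $\alpha$ and appeals to a separate technical lemma (Lemma~\ref{lem:tec}) whose maximizer is the implicit fixed point $\bar x = 1/(1+\omega)$ of $\omega(1+\omega)^\Delta = \lambda$; the threshold $\lambda_c(\T_{\Delta+1})$ then emerges from the condition $\Delta\omega/(1+\omega)<1$. You instead parameterize by $u = (\prod_i\alpha_i)^{1/\Delta}$, which converts the problem into maximizing the explicit polynomial $g(u) = \lambda u^\Delta[(\Delta^2-1)-\Delta^2 u]$, whose critical point $u^\star=(\Delta-1)/\Delta$ and maximum value $\lambda/\lambda_c(\T_{\Delta+1})$ can be read off by elementary calculus. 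Your route is a genuine streamlining: it bypasses the auxiliary $\omega$ and Lemma~\ref{lem:tec} entirely, at no cost.
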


\begin{proof}
First, a straightforward calculation implies that
$$\left\vert\frac{\partial F}{\partial m_i}\right\vert
=\frac{1-\alpha}{s-\alpha}(s-\alpha_i),$$ where
$\alpha_i=\varphi^{-1}(m_i)$ and
$\alpha=\left(1+\lambda\prod_{i=1}^{\Delta}\alpha_i\right)^{-1}$.

Hence, we have
\begin{align}
\nonumber \left\Vert \nabla F \right\Vert _{1} &=
\sum_{i=1}^{\Delta} \left\vert\frac{\partial F}{\partial
m_i}\right\vert
\\
\nonumber & =
\sum_{i=1}^{\Delta} \frac{1-\alpha}{s-\alpha}(s-\alpha_i)
\nonumber 
\end{align}
\begin{align}
&\leq \frac{1-\alpha}{s-\alpha}\ \Delta\
\left(s-\left(\prod_{i=1}^{\Delta}\alpha_i\right)^{1/\Delta}\right)
& \mbox{by the arithmetic-geometric mean inequality} \\
\label{ineq:gradient} &=\frac{1-\alpha}{s-\alpha}\ \Delta\
\left(s-\left(\frac{1-\alpha}{\lambda\alpha}\right)^{1/\Delta}\right).
\end{align}

We now use the following technical lemma.

\begin{lemma}\label{lem:tec}
\label{lem:fineq}$$\max_{x\in[0,1]} \frac{( 1-x) \left(
1+\frac1\Delta-( \frac{1-x}{\lambda x}) ^{\frac1\Delta}\right)
}{1+\frac1\Delta-x} ~\leq~\frac{\omega }{1+\omega},$$where $\Delta$
is a positive integer and $\omega$ is the unique solution to
$\omega( 1+\omega) ^{\Delta}=\lambda$.
\end{lemma}

Using the above inequality \eqref{ineq:gradient} with Lemma
\ref{lem:tec}, we have that:
\[  \left\Vert \nabla F \right\Vert _{1}<1
\ \ \mbox{  if }  \ \ \frac{\omega}{1+\omega}\cdot \Delta~<~1,
\] where $\omega$ is
the unique solution of $\omega(1+\omega)^{\Delta}=\lambda$. This
leads to the desired condition
$\lambda<\lambda_c(\T_{\Delta+1})=\Delta^{\Delta}/(\Delta-1)^{\Delta+1}$
so that SSM holds for $\bM = [\Delta]$. As we noted in Remark
\ref{rem:single-type}, this is equivalent to SSM for
$\bM=\begin{pmatrix}0 & \Delta+1\\0 & \Delta\end{pmatrix}$.
This completes the proof of Theorem \ref{thm:reprove-Weitz}.
\end{proof}

\begin{proof}[Proof of Lemma \ref{lem:tec}]
Let $\Phi_{\Delta}(  x)  =(  \frac{1-x}{\lambda x}) ^{\frac1\Delta}$
and $f( x) =\frac{(  1-x)  ( 1+\frac1\Delta-\Phi_{\Delta}(  x)  )
}{1+\frac1\Delta-x}$. Since $\Phi_{\Delta}^{\prime}(  x)
=-\frac{\Phi_{\Delta}( x) }{\Delta x( 1-x)  }$, $\Phi_{\Delta}$ is a
decreasing function in $[0,1]$ such that $\Phi_{\Delta}( 0)
=+\infty$ and $\Phi_{\Delta}( 1)  =0$. Therefore it has a unique
fixed point that can be shown to be $\bar{x}=\frac{1}{1+\omega}$.
Moreover, it is the case that $\Phi_{\Delta}(x)  >x$ if and only if
$x<\bar{x}$. To prove Lemma \ref{lem:tec}, we notice that
$f^{\prime}( x) =\frac{(  1+\frac1\Delta)  (  \Phi(  x) -x) }{\Delta
x( 1+\frac1\Delta-x) ^{2}}$, hence $f^{\prime}( x)  >0$ for
$x<\bar{x}$ and $f^{\prime}( x) <0$ for $x>\bar{x}$. This implies
that $f$ has a maximum at $\bar{x}$, namely $f(  \bar {x})
=\frac{\omega}{1+\omega}$.
\end{proof}

\subsection{DMS Condition: A Sufficient Criterion}

Theorem \ref{thm:reprove-Weitz} suggests choosing
$\varphi_{j}(x)=\frac{1}{s_{j}}\log\left( \frac{x}{s_{j}-x}\right) $
with appropriate parameters $s_j$ for a general branching matrix
$\bM$. Under this choice, we obtain the following condition for SSM.

\begin{definition}
[DMS Condition] Given a $t\times t$ branching matrix $\bM$ and
$\lambda^*>0$, for $s_{1},\ldots,s_{t}
>1$ and $\bc=\left( c_{1},\ldots,c_{t}\right)
>0$, let $\bD$ and $\bS$ be the diagonal matrices defined as
\[
D_{jj}=\sup_{\alpha\in\left[  \frac1{1+\lambda^*},1\right]
}\frac{\left( 1-\alpha\right) \left(  1-\theta_{j}\left(
\frac{1-\alpha}{\lambda^*\alpha}\right)^{1/\Delta_{j} }\right)
}{s_{j}-\alpha}\text{\quad\quad and \quad\quad}S_{jj}=s_{j}\text{,}
\]
where
\[
\theta_{j}:=\frac{\left(  \prod_{\ell}c_{\ell}^{M_{j\ell}}\right)
^{1/\Delta_{j}}}{\sum
_{\ell}c_{\ell}s_{\ell}M_{j\ell}/\Delta_{j}}\text{ \quad\quad
and\quad\quad}\Delta_{j}=\sum _{\ell}M_{j\ell}\text{.}
\]
We say the {\em DMS Condition holds for $\bM$ and $\lambda^*$} if
there exist $s_{1},\ldots,s_{t}>1$ and $\bc>0$ such that:
\[
\left(  \bD\bM\bS\right)  \bc<\bc.
\]

\end{definition}
\begin{theorem}\label{cor:dms}
If the DMS Condition holds for $\bM$ and $\lambda^*>0$, then
Condition ($\starcond$) holds with the choice of
$\varphi_{j}(x)=\frac{1}{s_{j}}\log\left( \frac{x}{s_{j}-x}\right) $
for all $\lambda\leq \lambda^*$. Consequently, SSM and the
conclusions of Theorem~\ref{thm:implications} hold for $\bM$ and all
$\lambda\leq\lambda^*$.
\end{theorem}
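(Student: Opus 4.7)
The plan is to exploit the explicit form of the chosen statistic $\varphi_j(x)=\frac{1}{s_j}\log(x/(s_j-x))$ to compute each partial derivative $\partial F_\ell/\partial m_i$ in closed form, and then to reduce Condition $(\starcond)$ to the matrix inequality $(\bD\bM\bS)\bc<\bc$ by a single application of the arithmetic-geometric mean inequality, in direct parallel with the single-type calculation carried out in the proof of Theorem \ref{thm:reprove-Weitz}.

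First I would compute $\varphi_j'(\alpha)=1/(\alpha(s_j-\alpha))$ and hence $(\varphi_j^{-1})'(m)=\alpha(s_j-\alpha)$. Writing $\alpha_i=\varphi_{s_\ell(i)}^{-1}(m_i)$ and $\alpha=1/(1+\lambda\prod_j\alpha_j)$, so that $F_\ell(m_1,\ldots,m_{\Delta_\ell})=\varphi_\ell(\alpha)$, the chain rule combined with the identity $\lambda\prod_j\alpha_j=(1-\alpha)/\alpha$ yields $|\partial F_\ell/\partial m_i|=(1-\alpha)(s_{s_\ell(i)}-\alpha_i)/(s_\ell-\alpha)$. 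Weighting by $c_{s_\ell(i)}$ and summing over $i$, the $s_{s_\ell(i)}$-terms collapse (since exactly $M_{\ell k}$ indices $i$ have $s_\ell(i)=k$) to $\sum_k c_k s_k M_{\ell k}=(\bM\bS\bc)_\ell$, while AM-GM bounds the remaining $\alpha$-terms from below by
\[
\sum_{i=1}^{\Delta_\ell} c_{s_\ell(i)}\alpha_i\;\geq\;\Delta_\ell\left(\prod_k c_k^{M_{\ell k}}\right)^{1/\Delta_\ell}\left(\frac{1-\alpha}{\lambda\alpha}\right)^{1/\Delta_\ell},
\]
using once more that $\prod_j\alpha_j=(1-\alpha)/(\lambda\alpha)$. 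Invoking the definition of $\theta_\ell$ then compresses the entire bound into
\[
\sum_{i=1}^{\Delta_\ell} c_{s_\ell(i)}\left|\frac{\partial F_\ell}{\partial m_i}\right|\;\leq\;\frac{(1-\alpha)\bigl(1-\theta_\ell((1-\alpha)/(\lambda\alpha))^{1/\Delta_\ell}\bigr)}{s_\ell-\alpha}\cdot(\bM\bS\bc)_\ell.
\]

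The last step is to pass from $\lambda$ to $\lambda^*$ via a monotonicity argument: at fixed $\alpha$, the factor $1-\theta_\ell((1-\alpha)/(\lambda\alpha))^{1/\Delta_\ell}$ is increasing in $\lambda$, and for $\lambda\leq\lambda^*$ the admissible range $\alpha\in[1/(1+\lambda),1]$ is contained in $[1/(1+\lambda^*),1]$, so the first factor on the right is at most $D_{\ell\ell}$. The DMS condition then gives $\sum_i c_{s_\ell(i)}|\partial F_\ell/\partial m_i|\leq(\bD\bM\bS\bc)_\ell<c_\ell$, which is exactly $(\starcond)$. Since $\varphi_\ell$ is continuously differentiable on $I$ with derivative $1/(\alpha(s_\ell-\alpha))$ bounded away from $0$, Lemma \ref{pro:recM} finishes the argument and delivers SSM for $\bM$ together with the conclusions of Theorem \ref{thm:implications}. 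The main care point is assembling the $\theta_\ell$ factor correctly from the AM-GM bound and verifying the direction of the $\lambda$-monotonicity; the rest of the calculation is mechanical and mirrors the $t=1$ case.
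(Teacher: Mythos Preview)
Your proposal is correct and follows essentially the same route as the paper's proof: compute the partial derivative in closed form as $\frac{1-\alpha}{s_\ell-\alpha}(s_{s_\ell(i)}-\alpha_i)$, apply AM--GM to the weighted sum $\sum_i c_{s_\ell(i)}\alpha_i$, factor using the definition of $\theta_\ell$, pass from $\lambda$ to $\lambda^*$ by monotonicity, bound by $D_{\ell\ell}$, and invoke the DMS inequality. You are slightly more explicit than the paper in spelling out the $\lambda$-monotonicity and in verifying the smoothness hypotheses of Lemma~\ref{pro:recM}, but the argument is the same.
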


\begin{proof}
First, one can check that
$$\left\vert\frac{\partial F_j}{\partial m_i}\right\vert
=\frac{1-\alpha}{s_j-\alpha}(s_{j_i}-\alpha_i),$$ where
$\alpha_i=\varphi^{-1}_{j_i}(m_i)$ and
$\alpha=\frac1{1+\lambda\prod_{i=1}^{\Delta_j}\alpha_{i}}$.

Hence, it follows that
\begin{align*}
\sum_{i=1}^{\Delta_{j}}c_{j_i }\left\vert \frac{\partial
F_{j}}{\partial m_{j}} \right\vert &=
\frac{1-\alpha}{s_j-\alpha}\sum_{i=1}^{\Delta_{j}}c_{j_i}(s_{j_i}-\alpha_i)
\\
&\leq \frac{1-\alpha}{s_j-\alpha}\left(
\sum_{i=1}^{\Delta_j}c_{j_i}s_{j_i}-\Delta_j\left(\prod_{i=1}^{\Delta_j}c_{j_i}\alpha_i\right)^{1/\Delta_j}\right)
\ \ \ \mbox{by the arithmetic-geometric mean ineq.} \\
&= \frac{1-\alpha}{s_j-\alpha}\left(
\sum_{i=1}^{\Delta_j}c_{j_i}s_{j_i}-\Delta_j\left(\prod_{i=1}^{\Delta_j}c_{j_i}\right)^{1/\Delta_j}
\left(\frac{1-\alpha}{\lambda\alpha}\right)^{1/\Delta_j}\right)\\
&=\frac{1-\alpha}{s_j-\alpha}\left(
1-\theta_j\left(\frac{1-\alpha}{\lambda\alpha}\right)^{1/\Delta_j}\right)
\sum_{i=1}^{\Delta_j}c_{j_i}s_{j_i} \ \ \ \mbox{by the definition of
$\theta_j$}
\\
&\leq\frac{1-\alpha}{s_j-\alpha}\left(
1-\theta_j\left(\frac{1-\alpha}{\lambda^*\alpha}\right)^{1/\Delta_j}\right)
\sum_{i=1}^{\Delta_j}c_{j_i}s_{j_i}
\\
&\leq D_{jj} \sum_{\ell} M_{j\ell} c_{\ell} s_{\ell} \ \ \ \mbox{by
the definition of $D_{jj}$}
\\
&< c_j \ \ \  \mbox{by the DMS condition.}
\end{align*}
which satisfies the desired condition ($\starcond$) of Lemma
\ref{pro:recM}. This completes the proof of Theorem~\ref{cor:dms}.
\end{proof}

\section{Application to $\integers^2$ in the hard-core model}\label{sec:grid}

In this section, we show how to apply Theorem \ref{cor:dms} and
Theorem \ref{thm:implications} to the two-dimensional integer
lattice $\integers^2$ and improve the lower bound on
$\lambda_c(\integers^2)$, resulting in the following theorem.

\begin{theorem}
\label{thm:z2} There exists a $t\times t$ matrix $\bM$ such that
$\Tsaw(\integers^2)\in\FM$ and the DMS Condition holds for
$\lambda^*=\bound$.

Therefore, the following hold for $\integers^2$ for all $\lambda \le
\lambda^*$:
\begin{enumerate}
\item
SSM holds on $\integers^2$.
\item
There is a unique infinite-volume Gibbs measure on $\integers^2$.
\item
If $\bM$ has maximum degree $\Delta$, if $t=O(1)$ and $\Delta=O(1)$,
then for every finite subgraph $G$ of $\integers^2$, Weitz's
algorithm \cite{Weitz} gives an FPAS for approximating the partition
function $Z(G)$.
\item
For every finite subgraph $G$ of $\integers^2$, the Glauber dynamics
has $O(n\log n)$ mixing time.
\end{enumerate}
\end{theorem}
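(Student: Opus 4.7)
The bulk of the theorem follows immediately from Theorem \ref{thm:implications} once we exhibit a branching matrix $\bM$ such that $\Tsaw(\integers^2,v) \in \FM$ for every $v \in \integers^2$ and such that the DMS Condition holds at $\lambda^* = \bound$. Hence my plan splits into two independent tasks: (i) constructing an appropriate $\bM$ with finitely many types that captures the local geometric constraints of self-avoiding walks on $\integers^2$, and (ii) exhibiting weights $s_1, \dots, s_t > 1$ and $\bc > 0$ that verify $(\bD\bM\bS)\bc < \bc$ for all $\lambda \le \bound$.

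For the construction of $\bM$, my plan is to mimic the window-based typing used in the connectivity-constant bounds of \cite{A93,PT00}. Each vertex $u$ in a SAW of $\integers^2$ is reached by a unique walk from the origin, and its children in $\Tsaw$ correspond to the $\leq 3$ nonbacktracking neighbors of $u$ that continue a self-avoiding walk. The key point is that only the last few moves matter for ruling out \emph{local} self-intersections: certain combinations of turns immediately produce a cycle. I would therefore define a type of a vertex to be the pattern of the last $k$ steps of the walk reaching it (for some small constant $k$, to be tuned numerically), encoded as a sequence in $\{N,E,S,W\}^k$ compatible with self-avoidance. For each such type $i$, the number of extensions of each type $j$ is a deterministic integer $M_{ij} \in \{0,1\}$, and the row sums are at most $3$ (often strictly smaller because a turn pattern locks out further directions). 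It is then immediate by induction on depth that $\Tsaw(\integers^2,v) \in \FM$ for every starting vertex $v$, since any SAW in $\integers^2$ obeys exactly these per-step restrictions; conversely, $\bM$ generates a tree that strictly contains $\Tsaw(\integers^2,v)$ as a subtree, which is enough by Definition~\ref{def:branching-matrix}.

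With $\bM$ in hand, verifying the DMS Condition is a numerical task. I would assign $s_i$ and $c_i$ weights that depend only on a coarser feature of the type (for example, only on the final step direction, or on the last two steps), so that the parameter space is small. Because $\bD$ is diagonal with entries $D_{jj}$ that are suprema over $\alpha \in I$ of an explicit function of $\theta_j$, each $D_{jj}$ is easily computed numerically; one then writes out the linear inequality $(\bD\bM\bS)\bc < \bc$ componentwise and checks it. I would search over a small grid of candidate weight profiles, using the spectral radius heuristic of \cite{PT00} as a starting guess (namely choosing $\bc$ near a Perron eigenvector of the combinatorial $\bM$), refining until the inequality holds at $\lambda^* = \bound$. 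Once it holds, Theorem~\ref{cor:dms} gives SSM for $\bM$, and Theorem~\ref{thm:implications} delivers all four conclusions of Theorem~\ref{thm:z2} (using Observation~\ref{obs:SSM-WSM} and Corollary~\ref{cor:weitz} to transfer from $\bM$ to $\Tsaw(\integers^2)$ to $\integers^2$ itself, and noting that $\integers^2$ is trivially neighborhood-amenable so that the Glauber mixing conclusion applies).

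The main obstacle I anticipate is the tension in choosing the window size $k$. Taking $k$ too small loses too much structure and prevents us from beating Weitz's bound of $27/16$; taking $k$ too large makes $t$ exponential in $k$ and the DMS search becomes brittle. I expect that a modest $k$ (on the order of $3$--$5$) combined with a careful reduction of equivalent types under lattice symmetries (reflections and $90^\circ$ rotations, which preserve both the branching structure and the hard-core weighting) will keep $t$ small enough to make the numerical verification tractable while still exploiting enough of the $\integers^2$ geometry to reach $\lambda^* = \bound$. The secondary obstacle is that the DMS inequality is nonlinear in $(s_j, c_j)$ through the $\theta_j$ factors, so the search cannot be purely linear-algebraic; I plan to address this by fixing $s_j$ to depend on a few free scalars and optimizing numerically, then certifying the final inequality rigorously via interval arithmetic.
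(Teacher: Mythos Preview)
Your overall architecture matches the paper's: build a finite-type branching matrix $\bM$ capturing local SAW constraints on $\integers^2$, then verify the DMS Condition numerically and invoke Theorems~\ref{cor:dms} and~\ref{thm:implications}. But there is a genuine missing idea that prevents your construction from reaching $\lambda^* = \bound$.

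Your $\bM$ encodes only self-avoidance within a window of $k$ steps. The paper carries out exactly this construction first (with $k=3$, yielding a $4$-type matrix $\bN$) and obtains only $\lambda^* = 1.8801$. To go further, the paper exploits a second source of pruning that you have omitted: the \emph{fixed leaf assignments} in Weitz's $\Tsaw$ construction. When a walk closes a cycle, the resulting leaf is not merely removed but is assigned occupied or unoccupied according to the ordering of the first and last edges of the cycle; an \emph{occupied} leaf forces removal of its parent as well. Consequently, certain step-patterns (e.g.\ $WNE$, $WSE$, $SEN$, $ENW$ under the ordering $N>E>S>W$) \emph{always} have a child that closes a $4$-cycle as an occupied leaf, so the pattern and its entire subtree can be deleted from the type set. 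This extra pruning is substantial: with $4$-cycles it already lifts the bound from $1.8801$ to $\boundF$, and combined with avoiding cycles of length $\le 8$ (window length $7$, giving $922$ types) it yields $\bound$.

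Two knock-on corrections follow. First, your estimate $k \in \{3,\dots,5\}$ is too small; the paper needs to track seven steps to reach $\bound$. Second, your plan to quotient by rotations and reflections is incompatible with the occupied-leaf pruning, since the occupied/unoccupied assignment depends on a fixed ordering of neighbor directions and is not rotation-invariant. Finally, for the rigorous verification the paper does not use generic interval arithmetic: it proves that the function defining $D_{jj}$ is concave in $\alpha$ (in the relevant parameter range $s_j>51/50$, $\lambda>27/16$), so a piecewise-linear upper envelope through two well-chosen tangent points gives certified rational upper bounds $\hat D_{jj}\ge D_{jj}$, after which $(\hat\bD\bM\bS)\bc<\bc$ is a finite rational check.
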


We first illustrate our approach by showing that Theorem
\ref{thm:z2} holds with $\lambda^*=1.8801$ for a simple choice of
$\bM$. We then explain how to extend the approach to higher
$\lambda$.

The graph $\integers^2$ is translation-invariant, hence the tree
$\Tsaw(\integers^2,v)$ is identical for every vertex
$v\in\integers^2$.  Fix a vertex, call it the origin $\orig$, and
let us consider $\Tsaw(\integers^2)=\Tsaw(\integers^2,\o)$. Each
path from the root of $\Tsaw(\integers^2)$ corresponds to a
self-avoiding walk in $\integers^2$ starting at the origin. Any walk
on $\integers^2$ starting at the origin $\orig$ can be encoded as a
string over the alphabet $\{N, E, S, W\}$ corresponding to North,
East, South and West. The tree $\Tsaw(\integers^2)$ contains such
strings, truncated the first time the corresponding walk completes a
cycle. A relaxed notion of such a tree would be to truncate a walk
only when a 4-cycle is completed. Denote such a tree by $\Tfour$,
and clearly we have that $\Tsaw(\integers^2)$ is a subtree of
$\Tfour$. Our first idea is to define a branching matrix $\bN$ so
that $\Tfour \in \cF_{\le \bN}$, and hence $\Tsaw(\integers^2) \in
\cF_{\le \bN}$.

To avoid cycles of length four, it is enough to track the last three
steps of the walks. Labeling the paths using $\{N,E,S,W\}$ as
mentioned above, their branching rule is easily determined. For
example, a path labeled $NWW$ is followed by paths labeled $WWS$,
$WWN$ and $WWW$ which corresponds to adding the directions $S$, $N$
and $W$ respectively. As another example, a path labeled $NWS$ is
followed by paths labeled $WSW$ and $WSS$ corresponding to adding
the directions $W$ and $S$ to the path, while adding the direction
$E$ would have resulted in  a cycle of length 4. The number of types
in the corresponding branching matrix is $\le 4+4^2+4^3\leq 5^3$.
Indeed, we can reduce the representation of such paths by using
isomorphisms between the generating rules among them. This results
in 4 types in the following branching matrix $\bN$:
\begin{equation}\label{eq:m1}
\bN=\begin{pmatrix}
0&4&0&0\\
0&1&2&0\\
0&1&1&1\\
0&1&1&0
\end{pmatrix},
\end{equation}
{where the type $i=0,...,3$ of a vertex (walk) in the tree
represents the fact that a continuation with a minimum of $4-i$
additional edges are needed to complete a cycle of length 4.}

\begin{figure}[h]
\begin{center}
\includegraphics[width=12cm,angle=0]{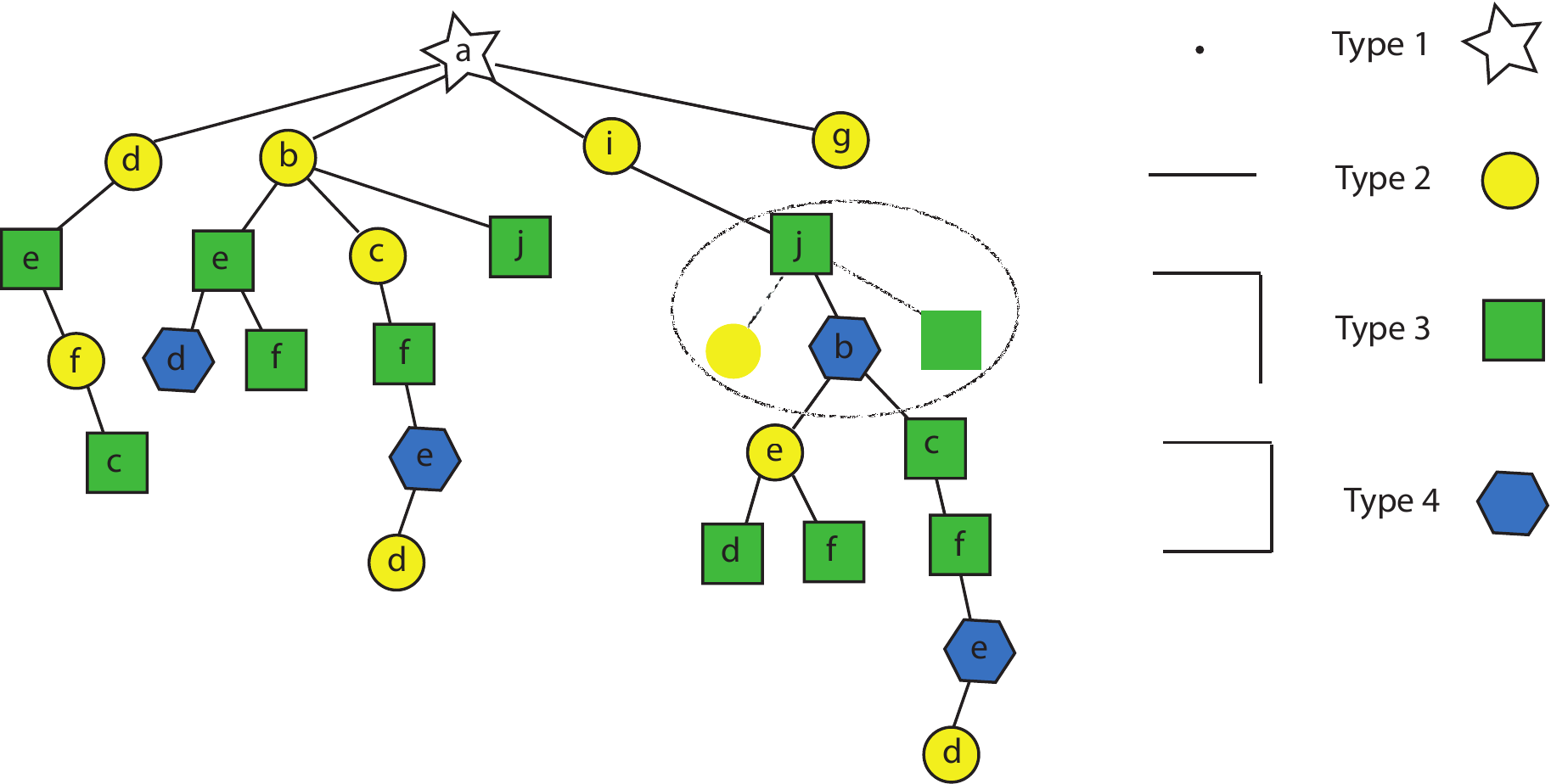}
\caption{Assignment of the four types from matrix $\bN$ defined in
\eqref{eq:m1}
 to the self-avoiding walk tree $\Tsaw$ from Figure \ref{fig:saw}. In the circled area, we
also draw redundant leaves at vertex $j$ which may appear in the
branching rule, but not in $\Tsaw$.} \label{fig-4types}
\end{center}
\end{figure}

See Figure \ref{fig-4types} for an illustration of this branching
matrix $\bN$. One can verify that this branching matrix captures,
inter alia,
 the self-avoiding walk trees from $\integers^2$:
\begin{observation}
\label{lem:approx1} For any finite subgraph $G=(V,E)$ of
$\integers^2$ and $v\in V$, $\Tsaw(G,v)\in \mathcal F_{\leq \bN}$.
\end{observation}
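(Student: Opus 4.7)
The plan is to exhibit an explicit type assignment $t \colon V(\Tsaw(G,v)) \to \{0,1,2,3\}$ such that at every vertex $u$ of type $i$, the number of type-$j$ children is at most $N_{ij}$, thereby placing $\Tsaw(G,v)$ in $\mathcal{F}_{\leq\bN}$. The guiding intuition is that the shortest cycle in $\integers^2$ has length $4$, so the ``closing potential'' of a self-avoiding walk depends only on its last three edges, and these three bits of state past the current endpoint suffice to drive the branching recursion.

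For a vertex $u$ corresponding to the walk $w_0=v, w_1, \dots, w_k$ in $G$, I would set $t(u)=0$ if $k=0$ (the root); $t(u)=1$ if $k=1$, or if $k\ge 2$ and the last two edges $w_{k-2}w_{k-1}$ and $w_{k-1}w_k$ are parallel; $t(u)=3$ if $k\ge 3$ and the four vertices $w_{k-3},w_{k-2},w_{k-1},w_k$ are the corners of a unit square in $\integers^2$; and $t(u)=2$ otherwise (last two edges perpendicular, but not forming a unit square with the preceding vertex). The crucial geometric fact is that $w_k$ is adjacent to $w_{k-3}$ iff those four vertices form a unit square, iff $u$ is type $3$; these are exactly the walks one step away from closing a $4$-cycle.

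The branching is then a short case analysis on $t(u)$. The root has at most four neighbors in $G\subseteq \integers^2$, each producing a length-$1$ walk of type $1$, matching row $0$ of $\bN$. A type-$1$ vertex has at most three non-backtracking extensions: the straight continuation has last two edges parallel (type $1$), while each of the two perpendicular turns produces a walk whose third-to-last and second-to-last edges are parallel, so its last three edges cannot be three sides of a unit square (type $2$); this matches row $1$. At a type-$2$ vertex, writing the last two edge directions as $d_{-1}$ and $d_0$ with $d_{-1}\perp d_0$, the three non-backtracking extensions use directions $d_0, d_{-1}, -d_{-1}$: direction $d_0$ is a parallel continuation (type $1$); direction $-d_{-1}$ completes the preceding L-shape into three sides of a unit square (type $3$); and direction $d_{-1}$ produces the last-three-edges pattern $d_{-1}, d_0, d_{-1}$, for which the $L_1$-distance between $w_{k-2}$ and $w_{k+1}$ is $3$, ruling out a unit square (type $2$). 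This matches row $2$.

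The main subtlety, and the one place where Weitz's closing-leaf bookkeeping is used, is the type-$3$ case. By definition the already-visited vertex $w_{k-3}$ is a neighbor of $w_k$ (lying in direction $-d_{-1}$ from $w_k$, with $d_{-1}=e_{k-1}$), so the corresponding extension is a $4$-cycle closing leaf in $\Tsaw(G,v)$. Weitz's procedure either fixes this leaf to unoccupied (removing only the leaf) or to occupied (removing both the leaf and its parent $w_k$); in the second case $w_k$ is absent from $\Tsaw(G,v)$ altogether, while in the first case $w_k$ retains at most its two other non-backtracking extensions as children. These two remaining directions are the straight continuation $d_0$ (type $1$) and the turn $d_{-1}$, whose last-three-edges pattern $d_{-1}, d_0, d_{-1}$ is again not a unit square (type $2$); this matches row $3$. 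Any further closing leaves arising from longer cycles (of length $\ge 6$) can only prune the tree further and so cannot violate the upper bound. This completes the verification that $\Tsaw(G,v)\in\mathcal{F}_{\leq\bN}$.
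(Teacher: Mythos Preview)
Your argument is correct and is essentially the same approach as the paper's: the paper states this as an observation without proof, merely noting that ``the type $i=0,\dots,3$ of a vertex (walk) in the tree represents the fact that a continuation with a minimum of $4-i$ additional edges are needed to complete a cycle of length~$4$,'' and your explicit type assignment based on the last two or three steps is precisely this quantity, with your case analysis spelling out the verification the paper leaves to the reader. One minor remark: for the type-$3$ case you invoke Weitz's occupied/unoccupied bookkeeping, but the paper emphasizes that $\bN$ does \emph{not} yet use this effect---it suffices that the $4$-cycle--closing extension is a fixed leaf and hence removed from $\Tsaw(G,v)$ in either case, so row~$3$ holds regardless; your argument is still correct, just slightly more than needed at this stage.
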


For this branching matrix, one can check that the {(DMS)} condition
of Theorem \ref{cor:dms} holds with $\lambda^*=1.8801$,
$\bS=\textrm{\textbf{Diag}} ( 1.040, 1.388, 1.353, 1.255) $ and
$\bc=( 0.266037, 0.100891, 0.100115, 0.0973861 ) $. Checking the DMS
Condition for a given choice of parameters would have been a
straightforward task, were it not for the irrationality of the
coefficients $D_{jj}$. However, one can establish rigorous upper
bounds for $D_{jj}$, based on concavity of the function (of
$\alpha$) used in the definition of $D_{jj}$, in a suitable range of
the parameters. These details will be discussed further below. As a
consequence, we can conclude that Theorem \ref{thm:z2} holds for
$\bN$ and $\lambda^* = 1.8801$.

The primary reason why the branching matrix $\bN$ improves beyond
the tree-threshold of $\lambda<\lambda_c(\T_4)=27/16=1.6875$ is that
the average branching factor of any $T\in \mathcal F_{\leq \bN}$ is
significantly smaller than that of the regular tree of degree $4$.

To obtain a further reduction in the average branching, we observe
that  $\bN$ did not consider the effect of occupying  (or
unoccupying) certain leaves as prescribed in Weitz's construction.
Starting with $\Tfour$,  prune the leaves as is done in
 the construction of $\Tsaw(\integers^2)$ from Section \ref{sec:saw}.
  Denote the new tree as $\Tfour'$.  Clearly we still have that $\Tsaw(\integers^2)$
 is a subtree of $\Tfour'$.

Let us illustrate the difference between $\Tfour$ and the pruned
tree $\Tfour'$. We first fix an underlying order for the neighbors
of each vertex.  To this end, say $N>E>S>W$ and this prescribes an
ordering of the neighbors of each vertex. Consider a leaf vertex
$v'$ in the tree $\Tfour$ corresponding to the vertex $v$ in
$\integers^2$ and to the path $\rho$ in $\integers^2$. Since $v'$ is
a leaf vertex in $\Tfour$, $\rho$ must end with a cycle at $v$, say
$WNES$.
  Since $v$ was exited in the West direction at the beginning of the 4-cycle, and since $W<N$,  the leaf vertex $v'$ would be labeled occupied in Weitz's construction, thus resulting in the removal of $v'$ and its parent in
  the construction of $\Tfour'$.
  Note, every vertex $w'$ in $\Tfour$ of type $WNE$ has a child $v'$ of type
  $NES$, and consequently $w'$ (and its subtree) will be removed from
  the tree in the pruning process to construct $\Tfour'$.  Thus, after removing vertices of type $WNE$ (and similarly, $WSE$, $SEN$ and $ENW$) from $\Tfour$, it is still the case that $\Tsaw(\integers^2)$ is a subtree of the resulting tree ($\Tfour'$).
This highlights why $\Tfour'$ has a significantly smaller average
branching factor than $\Tfour$.

We can define a branching matrix $\bM_2$, with 17 types (as
illustrated in Figure \ref{fig-17types}), such that $\Tfour'\in
\mathcal F_{\le \bM_2}$, and hence $\Tsaw(\integers^2)\in \mathcal
F_{\le \bM_2}$. We can prove the DMS Condition is satisfied for
$\bM_2$ at $\lambda^*=\boundF$, as we will describe shortly, which
significantly improves upon our initial bound resulting from
considering $\Tfour$.
\begin{figure}[h]
\begin{center}
\includegraphics[width=10cm,angle=0]{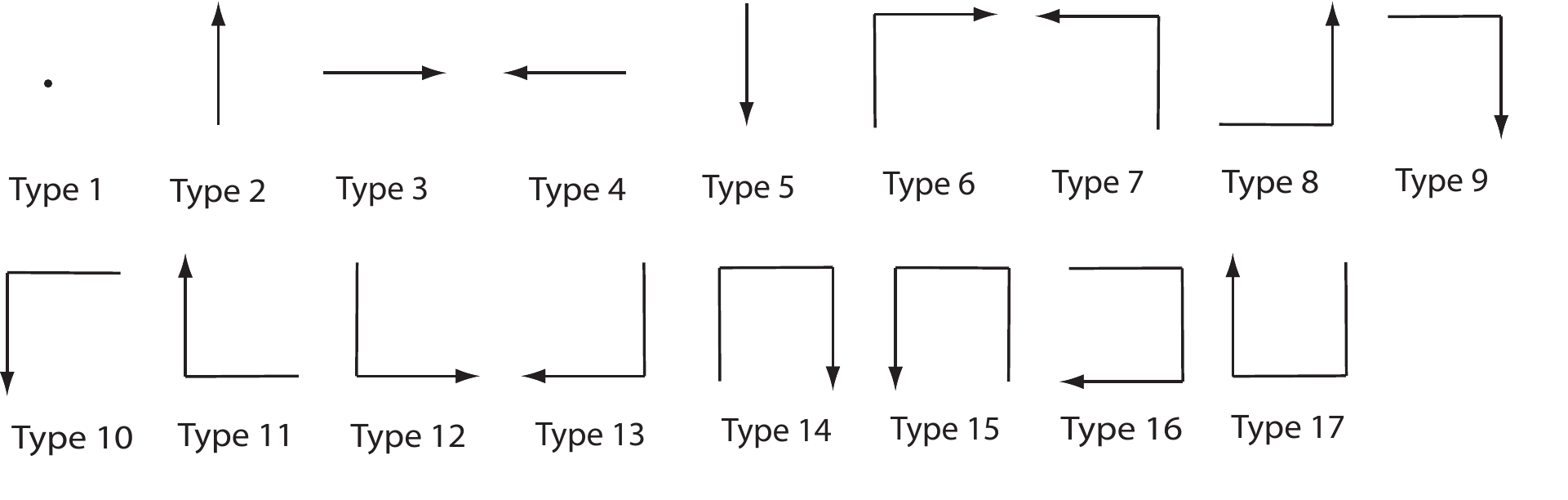}
\caption{Shapes that the seventeen types (or labels) represent for
$\bM_2$ where $\Tfour'\in \mathcal F_{\le \bM_2}$.}
\label{fig-17types}
\end{center}
\end{figure}

A natural direction for improved results is to consider branching
matrices corresponding to avoidance of larger cycles, while also
accounting for the removal of vertices prescribed by the
construction of Weitz. We briefly outline  such an approach for
walks avoiding cycles of length at most 4, 6, and 8, respectively.
Avoiding cycles of length $2i$ results in $\sum_{j\leq 2i-1} 4^j
\leq 5^{2i-1}$ types, hence the computations become increasingly
difficult for larger $i$. For 8-cycles the task of  finding
{appropriate} parameters to satisfy the conditions of Theorem
\ref{cor:dms} is still feasible.

More precisely, we can define branching matrices $\bM_i$ for $i \ge
2$, that (i) represent the structure of trees of walks avoiding
cycles of length $\leq 2i$, as well as (ii) account for the removal
of vertices based on children being labeled `occupied.' One can
extend the above construction of $\bM_2$ for general $i > 2$ by
using types encoded by longer paths with length at most $2i$ and
ruling out the types that either contain a cycle of length at most
$2i$ or whose children end up being labeled occupied. We can make
the following general observation from our construction.
\begin{observation}
\label{lem:approx2} For any finite subgraph $G=(V,E)$ of
$\integers^2$ and $v\in V$, $\Tsaw(G,v)\in \mathcal F_{\leq \bM_i}$
for any $i \ge 2$.
\end{observation}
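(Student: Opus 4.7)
The plan is to generalize the type-assignment argument implicit in the constructions of $\bN$ and $\bM_2$. For each vertex $u$ in $\Tsaw(G,v)$, the unique walk from the root $v$ to $u$ corresponds to a self-avoiding walk in $G\subseteq\integers^2$, encoded as a string over $\{N,E,S,W\}$. I would assign $u$ the type determined by the last $\min(2i-1,\mathrm{depth}(u))$ letters of this string, collapsed modulo the rotational and reflectional symmetries of $\integers^2$ under which the hard-core branching rules are invariant. This mirrors the reduction from the naive $\le 5^{2i-1}$ raw types to the smaller equivalent set used in $\bM_2$.

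Once types are defined, I would verify two properties. First, the assigned type always lies in the admissible set enumerated by $\bM_i$: by the construction of $\bM_i$, the admissible types are exactly the equivalence classes of length-$\le(2i-1)$ walk patterns that (a) complete no cycle of length $\le 2i$ and (b) are not themselves marked for removal by Weitz's occupied-leaf pruning. A walk in $\Tsaw(G,v)$ whose terminal segment closed a short cycle would either correspond to a leaf that was deleted (unoccupied case) or would have triggered deletion of its parent (occupied case, as illustrated for the $WNE$ pattern in the discussion of $\Tfour'$). In either situation, such a walk does not survive as an internal branching vertex of $\Tsaw(G,v)$, so every remaining vertex carries a valid type.

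Second, I would verify the branching bound: the number of children of each type $j$ at a vertex of type $\ell$ in $\Tsaw(G,v)$ is at most $(\bM_i)_{\ell j}$. A child of $u$ in $\Tsaw(G,v)$ arises from an edge in $G$ that extends the walk self-avoidingly and does not trigger pruning. Since $G\subseteq\integers^2$, each such extension is one of at most four directional steps, and the entries of $\bM_i$ are built by enumerating exactly these legal extensions in $\integers^2$, grouping them by the resulting new terminal pattern. For $G\subsetneq\integers^2$, fewer directional steps are available, which only decreases the child count and therefore preserves the inequality required by Definition~\ref{def:branching-matrix}. Combined with the first step, this gives $\Tsaw(G,v)\in\mathcal F_{\le\bM_i}$.

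The main obstacle is bookkeeping rather than conceptual: for each $i$, one must enumerate the equivalence classes of walk patterns of length $\le 2i-1$ and verify case-by-case that the constructed $\bM_i$ correctly records both the cycle-closure exclusions and the occupied-pruning removals. For $i=2$ this enumeration is done by hand and yields the 17-type matrix $\bM_2$ depicted in Figure~\ref{fig-17types}; for $i\ge 3$ the same procedure applies with no new conceptual ingredient, but it should be delegated to a computer enumeration because the number of raw patterns grows as $O(5^{2i-1})$.
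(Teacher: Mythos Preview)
Your proposal is correct and matches the paper's treatment: the paper states this as an observation with no formal proof, relying on the fact that $\bM_i$ is \emph{constructed} precisely so that the surviving vertices of $\Tsaw(G,v)$ carry admissible types and respect the branching bounds, exactly as you spell out. One small caution: your remark about collapsing types modulo rotational and reflectional symmetries is a bit optimistic once the Weitz pruning is in play, since the fixed ordering $N>E>S>W$ used to decide occupied versus unoccupied leaves is not invariant under the full dihedral group of $\integers^2$; this is why $\bM_2$ has 17 types rather than the smaller number a full symmetry reduction would give, but it does not affect the correctness of your argument for membership in $\mathcal F_{\leq\bM_i}$.
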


As mentioned earlier, the matrix $\bM_2$ constructed above consists
of $17$ types. An explicit description of it is shown in the Online
Appendix \cite{URL}, along with the associated parameters $\bS$ and
$\bc$ for which one can check the DMS Condition for $\lambda^* =
\boundF$; this establishes  Theorem \ref{thm:z2} for $\bM_2$ and
$\lambda^* = \boundF$.

The following table summarizes the threshold $\lambda^*$ we obtain
for each $\bM_i$:
\begin{center}
\begin{tabular}{|c|c|c|c|}
\hline
Max length of Avoiding-cycles&Effect of Occupations&Number of Types &$\lambda^*$\\
\hline\hline
4 &No&4&1.8801\\
\hline
4&Yes&17  $(<5^3)$&\boundF\\
\hline
6&Yes&132 $(<5^5)$&2.3335\\
\hline
8&Yes&922 $(<5^7)$&\bound\\
\hline
\end{tabular}
\end{center}

Note that, one can further improve the bound on $\lambda$ by using
more types for higher $i$ and hence Theorem \ref{thm:z2} on
$\integers^2$ will hold with the corresponding activity $\lambda^*$.
For any such matrix, the verification of the DMS Condition relies on
(i) `guessing' appropriate values for the parameters $\bS$ and $\bc$
and (ii) formally verifying that DMS Condition holds for the chosen
$\bS$ and $\bc$. In choosing desirable  $\bS$ and $\bc$, we employed
a heuristic random walk algorithm.

To verify that the DMS Condition holds for a given {\em rational}
matrix $\bS$ and vector $\bc$ is straightforward, provided we can
obtain a rational upper bound for each type $j$ for the function:
\[
f_j(\alpha) = \frac{\left( 1-\alpha\right) \left(
1-\theta_{j}\left(
\frac{1-\alpha}{\lambda\alpha}\right)^{1/\Delta_{j} }\right)
}{s_{j}-\alpha}\,.
\]
Indeed, due to the concavity of this function for $0 < \theta_j \le
1$, $s_j > 51/50$ and $\lambda > 27/16$, \footnote{This is a
nontrivial algebraic fact. It can be proved by transforming the
second derivatives condition to a set of integer polynomial
constraints and using the ``\textbf{resolve}'' function in
MATHEMATICA for the satisfiability of the constraints, which is
rigorous by the Tarski-Seidenberg Theorem \cite{TARSKI} for the real
polynomial systems \cite{mathematica} and the so-called cylindrical
algebraic decomposition \cite{CAD}.} it is always possible to find a
\emph{provable} upper bound for $f_j$ in such a regime. This can be
done, for example,  by describing a suitable `envelope' for $f_j$
consisting of a piecewise linear function of the form:
\[
g_j\left(  \alpha\right)  =\left\{
\begin{array}
[c]{cc}
B_{\ell} & \text{if }\alpha<\alpha_{\ell}\\
\min\{b_{\ell}\left(  \alpha-\alpha_{\ell}\right)
+B_{\ell},b_{u}\left(  \alpha
-\alpha_{u}\right)  +B_{u}\} & \text{if }\alpha_{\ell}<\alpha<\alpha_{u}\\
B_{u} & \text{if }\alpha>\alpha_{u}
\end{array}
\right.
\]
where $\alpha_{\ell},\alpha_{u}$ are points such that
$b_{\ell}>f_{j}^{\prime }\left(  \alpha_{\ell}\right)  >0$,
$b_{u}<f_{j}^{\prime}\left(  \alpha _{u}\right)  <0$,
$B_{\ell}>f_j\left(  \alpha_{\ell}\right)  $ and $B_{u}>f_j\left(
\alpha_{u}\right)  $.  It is clear for any such function that
$g_j(\alpha)>f_j(\alpha)$, thus we obtain a provable upper bound for
$f_j$ using $g_j$.

For every $\bM_i$ in the above table, we provide $\bS$ and $\bc$,
along with appropriate envelopes that lead to upper bounds
$\hat{D}_{jj}$ for the corresponding $D_{jj}$.  Then we verify that
the DMS Condition holds for the given values of $\lambda$ by
replacing $D_{jj}$ with $\hat{D}_{jj}$. For $i=2,3,4$ these values
($\bM$, $\bS$, $\bc$, $\alpha_{\ell}$ and $\alpha_{u}$) are given in
the Online Appendix \cite{URL}.

\section{Ising Model}
\label{sec:Ising} The approach taken here for  the hard-core model
can also be employed to address corresponding questions in the
well-studied Ising model. The Ising model, with inverse temperature
parameter $\beta \ge 0$, on a finite graph $G=(V,E)$ is the model
associated with the Gibbs distribution $\mu$ on
$\Omega=\{-1,+1\}^{|V|}$ such that for $\sigma=[\sigma_i]\in\Omega$,
$$\mu(\sigma)~=~\frac1Z\exp\Bigl(\beta\sum_{(i,j)\in
E}\sigma_i\sigma_j\Bigr)\,,$$ where the normalizing constant is the
partition function: $Z=Z(G, \beta):=\sum_{\sigma \in\Omega}
\exp\left(\beta\sum_{(i,j)\in E}\sigma_i\sigma_j\right)$. The
notions of SSM, the self-avoiding walk tree representation, and
branching trees defined for the hard-core model extend identically
to the Ising model (or, for that matter, any other $2$-spin model).
Moreover, an analog of Lemma \ref{pro:recM} also follows easily for
Ising. Then, by the use of an appropriate statistic $\varphi$, the
following simpler analog of the DMS Condition can be proved for the
Ising model.

\begin{theorem}\label{thm:Ising}
Given a $t\times t$ branching matrix $M$ and $\beta^*>0$, suppose
there exists $\bc=(c_1,\ldots,c_t)> 0$ such that
\begin{equation}
\label{eq:DMS_Ising} \tanh(\beta^*)\bM\bc < \bc,
\end{equation}
then SSM and the conclusions of Theorem~\ref{thm:implications} hold
for $\bM$ and all $\beta\in[0,\beta^*]$.
\end{theorem}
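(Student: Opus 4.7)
The plan is to mirror the hard-core analysis, replacing the occupancy recursion by the standard Ising tree recursion and choosing a statistic tailored to the two-spin ferromagnetic structure. For the Ising model on a tree $T$ rooted at $r$, let $R_v^{\bfrho} = \mu^{\bfrho}_{T_v}(\sigma_v = +1)/\mu^{\bfrho}_{T_v}(\sigma_v = -1)$ be the ratio of root marginals in the subtree $T_v$ under boundary condition $\bfrho$. A direct calculation gives the recursion
\[
R_v^{\bfrho} \;=\; \prod_{w \in N(v)} \frac{e^{\beta} R_w^{\bfrho} + e^{-\beta}}{e^{-\beta} R_w^{\bfrho} + e^{\beta}}.
\]
I would choose the (single, type-independent) statistic $\varphi(R)=\log R$, so the message $m_v^{\bfrho} := \log R_v^{\bfrho}$ obeys the \emph{additive} recursion
\[
m_v^{\bfrho} \;=\; \sum_{w\in N(v)} g(m_w^{\bfrho}), \qquad g(m)\;:=\;\log\frac{e^{\beta} e^m + e^{-\beta}}{e^{-\beta} e^m + e^{\beta}}.
\]
In particular $\partial F_\ell/\partial m_i = g'(m_i)$ depends only on the $i$-th argument, which will be the source of the simpler form of the criterion.

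Next, I would invoke the SAW tree representation for two-spin systems (also due to Weitz) and the Ising analog of Lemma \ref{pro:recM}. Its proof is structurally identical to the hard-core case: the ferromagnetic Ising model is monotone under $\pm$ boundary conditions, so to establish WSM (and thence SSM, via Observation \ref{obs:SSM-WSM}) it suffices to bound $|\alpha^{+}_{L,r}-\alpha^{-}_{L,r}|$ by $\gamma^L$ on every tree $T\in\FM$. Introducing the fractional boundary $\beta_{L,r}(y)$, computing $\partial m_{L,r}/\partial y$ by the chain rule, and applying H\"older as in \eqref{ind-hyp} reduce the problem to showing, by induction on $L$, that $|\partial m_{L,r}(y)/\partial y| \le c_{t(r)} K \eta^{L-1}$ for suitable $K<\infty$ and $\eta<1$. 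Since $R_v$ ranges in a compact interval bounded away from $0$ and $\infty$ on any finite truncation, $\varphi$ and its inverse are smooth with bounded derivative, so message bounds convert to marginal bounds with only a bounded multiplicative loss.

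The core computation is to bound $g'$. Setting $x=e^m$,
\[
g'(m) \;=\; \frac{2\sinh(2\beta)\, x}{(e^{\beta}x+e^{-\beta})(e^{-\beta}x+e^{\beta})} \;=\; \frac{2\sinh(2\beta)\,x}{x^2 + 2x\cosh(2\beta) + 1}.
\]
By AM-GM, $x^2+1\ge 2x$, and hence the denominator is at least $2x(1+\cosh(2\beta)) = 4x\cosh^2(\beta)$. Thus uniformly in $m$ and $\beta\ge 0$,
\[
0 \;\le\; g'(m) \;\le\; \frac{2\sinh(2\beta)}{4\cosh^2(\beta)} \;=\; \tanh(\beta),
\]
with equality at $m=0$. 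Now, for each type $\ell$ and any choice of messages,
\[
\sum_{i=1}^{\Delta_\ell} c_{s_\ell(i)}\left|\frac{\partial F_\ell}{\partial m_i}\right|
\;=\; \sum_{i=1}^{\Delta_\ell} c_{s_\ell(i)}\,|g'(m_i)|
\;\le\; \tanh(\beta)\sum_{j=1}^{t} M_{\ell j}\, c_j
\;=\; \tanh(\beta)\,(\bM\bc)_\ell.
\]
The hypothesis $\tanh(\beta^*)\bM\bc<\bc$ therefore yields a uniform contraction factor $\eta<1$ in Condition ($\starcond$) for every $\beta\in[0,\beta^*]$ (using monotonicity of $\tanh$), and the Ising analog of Lemma \ref{pro:recM} then gives SSM on every $G\in\FM$, from which the remaining conclusions of Theorem~\ref{thm:implications} follow verbatim. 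The only mildly delicate step is the analog of Lemma~\ref{pro:recM}: one must verify that the monotonicity of the ferromagnetic Ising model together with the smoothness of $\varphi(R)=\log R$ on the relevant compact range legitimately reduces the WSM bound to contraction of the log-ratio messages. Everything else is cleaner than in the hard-core setting precisely because $F_\ell$ is additive, eliminating the need for the auxiliary $s_j$ parameters and the geometric-mean step used in the DMS condition.
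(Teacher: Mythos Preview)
Your proposal is correct and follows essentially the same route as the paper: use the log of the $+/-$ ratio as the statistic so that $F_\ell$ becomes additive, bound each partial derivative uniformly by $\tanh(\beta)$, and then invoke the Ising analog of Lemma~\ref{pro:recM} (together with the SAW-tree representation for two-spin systems and monotonicity of the ferromagnetic Ising model) to conclude SSM. The paper's computation of $\partial F_j/\partial m_i = \vartheta_i(e^{4\beta}-1)/[(e^{2\beta}\vartheta_i+1)(e^{2\beta}+\vartheta_i)]\le \tanh(\beta)$ is algebraically identical to your $g'(m)\le \tanh(\beta)$ via AM--GM, and the paper makes explicit the interval $I=[\exp(-2\beta\Delta),\exp(2\beta\Delta)]$ on which $\varphi'$ is bounded away from zero, which is the compact range you allude to.
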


\begin{proof}
First we note that Theorem \ref{thm:saw} holds in general for all
two spin models including the Ising model. Hence, Corollary
\ref{cor:weitz} and Remark \ref{rem:single-type} are applicable to
the Ising model as well. Further, observe that the proof of Theorem
\ref{thm:implications} (i.e., the implications of SSM) still hold
for the Ising model. Consequently, we can prove Theorem
\ref{thm:Ising} using similar notation and proof approach as was
used for Theorem~\ref{cor:dms}.

Given a tree $T\in \mathcal F_{\leq \bM}$ and configuration
$\bfrho$, let us define again $\alpha=\alpha^{\bfrho}_r(T,\beta)$ as
the probability that the root $r$ of $T$ is minus-spinned. (Recall
that in the hard-core model this was the probability that $r$ was
unoccupied.) If $w_{1},\ldots,w_{k}$ are the children of $r$ and
$T_{1},\ldots,T_{k}$ are the corresponding subtrees subtended at
them, we let $\alpha_{i}:= \alpha^{\bfrho}_{w_i}(T_i,\beta)$ for
$i\leq k$. For $i>k$, we define $\alpha_{i}:=1/2$.  Further let
$\vartheta_i=\frac{1-\alpha_i}{\alpha_i}$,
 and $\vartheta=\frac{1-\alpha}{\alpha}$.
Using these notations, a straightforward recursion calculation with
the partition function leads to the following:
\begin{equation}
\vartheta = \prod_{i=1}^{\Delta_j}
\frac{\exp(2\beta)\vartheta_i+1}{\vartheta_i+\exp(2\beta)}\,,
\label{eq:one-step1-Ising}
\end{equation}
where $j$ is the type of $r$, and $\Delta_j=\sum_{\ell} M_{j\ell}$.

Motivated by \eqref{eq:one-step1-Ising}, the function $F_j$ (defined
in Section \ref{sec:reg} for the hard-core model) can be redefined
for the Ising model as follows.
$$F_j(m_1,\ldots,m_{\Delta_j}):=\varphi_j\left(
\prod_{i=1}^{\Delta_j}
\frac{\exp(2\beta)\varphi^{-1}_{j_i}(m_i)+1}{\varphi^{-1}_{j_i}(m_i)+\exp(2\beta)}\right),$$
where $j_i$ is the type of child $w_i$ and  $\varphi_j$ is the
statistic
 for a vertex of type $j$. Further, we define
$m:=\varphi_j(\vartheta)$ and $m_i:=\varphi_{j_i}(\vartheta_i)$. It
follows from \eqref{eq:one-step1-Ising} that
$m=F_j(m_1,\ldots,m_{\Delta_j}).$ Then, one can prove the `Ising
version' of Lemma \ref{pro:recM} with the interval
$I=[\exp(-2\beta\Delta),\exp(2\beta\Delta)]$ using the same
arguments as those in the proof of Lemma \ref{pro:recM}. Further,
using the same arguments as in the proof of Theorem \ref{cor:dms},
with the choice of $\varphi_j(x):=\log (x)$, we have that
\[
\frac{\partial F_j}{\partial m_{i}}=\frac{\vartheta_{i}\left(
e^{4\beta }-1\right)  }{\left(  e^{2\beta}\vartheta_{i}+1\right)
\left(  e^{2\beta }+\vartheta_{i}\right)  }\leq\tanh\left(
\beta\right),
\]
from which the desired condition \eqref{eq:DMS_Ising} follows
easily.
 This completes the proof of Theorem
\ref{thm:Ising}.

\end{proof}
 Using Theorem \ref{thm:Ising} with branching matrices $\bM$
analogous to those we employed in Section~\ref{sec:grid} for the
hard-core model, we can prove that SSM holds for the Ising model on
$\integers^d$ for all $\beta<\beta^*$ as detailed in the following
table:

\begin{center}
\begin{tabular}{|c|c|}
\hline
Dimension&$\beta^*$\\
\hline\hline
2 & 0.392190\\
\hline
3& 0.214247\\
\hline
4& 0.148045\\
\hline
5& 0.113347\\
\hline
\end{tabular}
\end{center}
In comparison, applying Weitz's general technique to $\integers^2$
implies SSM for $\beta<.34657$.

We do not investigate the Ising model further because there are much
stronger results known for this model. Onsager \cite{Ons44}
established that $\beta_c(\integers^2)= \log(1+\sqrt{2}) \approx
0.440686$. And for general trees, Lyons \cite[Theorem 2.1]{L89}
established the critical point for uniqueness.

\section{Acknowledgements}
We are grateful to Karsten Schwan and the CERCS group at Georgia
Tech for lending the support of their GPU machines so that we could
conduct efficient parallel computations.




\end{document}